\numberwithin{equation}{section}
\newtheorem{theorem}{Theorem}[section]
\newtheorem{lemma}[theorem]{Lemma}
\newtheorem{thm}[theorem]{Theorem}
\newtheorem{prop}[theorem]{Proposition}
\newtheorem{cor}[theorem]{Corollary}
\newtheorem{defn}[theorem]{Definition}
\newtheorem{rmk}[theorem]{Remark}
\newcommand{\Rmnum}[1]{\expandafter\@slowromancap\romannumeral #1@}
\newcommand\blfootnote[1]{%
  \begingroup
  \renewcommand\thefootnote{}\footnote{#1}%
  \addtocounter{footnote}{-1}%
  \endgroup
}
\begin{document}
\title{Convergence Rates for the Stationary and Non-stationary Navier-Stokes Equations over Non-Lipschitz Boundaries}
\author{Yiping Zhang\thanks{School of Mathematics and Statistics, Central China Normal University, 430079 Wuhan, Hubei,
People’s Republic of China. Email: zhangyiping161@mails.ucas.ac.cn}}
\date{}
\maketitle
\begin{abstract}
In this paper, we consider the higher-order convergence rates for the 2D stationary and non-stationary Navier-Stokes
Equations over highly oscillating periodic bumpy John domains with $C^{2}$ regularity in some neighborhood of the boundary point (0,0). For the stationary case and any $\gamma\in (0,1/2)$,  using the variational equation satisfied by the solution and the correctors for the bumpy John domains obtained by
Higaki,  Prange and Zhuge \cite{higaki2021large,MR4619004} after correcting the values on the  inflow/outflow boundaries
$(\{0\}\cup\{1\})\times(0,1)$, we can obtain an $O(\varepsilon^{2-\gamma})$ approximation in $L^2$ for the
velocity and an $O(\varepsilon^{2-\gamma})$ convergence rates in $L^1$ approximated by the so called Navier's wall
laws, which generalized the results obtained by  J\"{a}ger and Mikeli\'{c} \cite{MR1813101}. Moreover, for the non-stationary case, using the energy method, we can obtain an $O(\varepsilon^{2-\gamma}+\exp(-Ct))$ convergence rate for the velocity in $L_x^1$.

\end{abstract}
\section{Introduction}
\noindent\blfootnote{ Mathematics Subject Classification: 35Q30, 35B27, 76D10, 76M50\\
Keywords: stationary and non-stationary Navier-Stokes equations, John domains, higher-order convergence rates.}

Over the past few decades, the effective boundary condition and the convergence rates for the flows over
bumpy domains have been extensively investigated. The effective boundary condition, also called the Navier's wall laws, is usually used for simulation of flows over bumpy domains. As early as in 1827, Navier \cite{navier1827lois} claimed that
the slip velocity should be proportional to the shear stress. The idea is to replace the no-slip
condition at rough boundaries with the non-penetration condition plus a relation between the tangential
velocity and the shear stress, which can be rigorously derived in \cite{MR1657773} for the Laplace operator in an annular
domain with rough perforations. See also the papers \cite{MR1322334,MR1648212,MR1657773,MR2286014,MR1602088} for an extensive reference for the wall laws.

Over the last two decades, the theories for the Stokes flows and the stationary Navier-Stokes flows over
bumpy domains have been extensively understood. In \cite{MR1813101}, J\"{a}ger and  Mikeli\'{c} considered
the 2D laminar viscous channel flow with the lateral surface of the channel containing surface
irregularities. They rigorously obtained the Navier friction condition and the $O(\varepsilon^{3/2-})$ (the definition of $O(\varepsilon^{3/2-})$ will be given later)
convergence rates with the help of the effective solutions. See also the 3D case \cite{MR1952473} with
the rough domain given by a Lipschitz graph, where an $O(\varepsilon^2)$ convergence rates in $L^1$ was
obtained (the similar results as in Corollary \ref{c1.2}), and the stochastic case \cite{MR2410410,MR2470924}.

Moreover, there are many works focusing on the regularity of flows or equations over bumpy domains. For the linear elliptic systems or equations in
divergence form with periodically oscillating coefficients over domains satisfying the so-called
$\varepsilon$-scale flatness, which could be arbitrarily rough below $\varepsilon$-scale, Zhuge
\cite{MR4199276} obtained a large-scale Lipschitz regularity estimate, see also the previous works by Kenig and Prange \cite{MR3325774,MR3784804}. For the Stokes equation over bumpy John domains, Higaki and Zhuge
\cite{MR4619004} investigated  the large-scale boundary regularity for the Stokes system in periodically
oscillating John domains and constructed boundary layer correctors of arbitrary order, which implies the
large-scale regularity estimate, as well as a Liouville theorem, of arbitrary
order for the Stokes system and the higher-order boundary
layer tails and wall laws in viscous fluids over rough boundaries. For the stationary Navier-Stokes Equations, Higaki and Prange \cite{MR4123336} obtained the large-scale $C^{1,\beta}$ regularity with the rough domain given by a Lipschitz graph. Later on, Higaki-Prange-Zhuge \cite{higaki2021large} considered this problem over bumpy John domains and obtained the large-scale $C^{2,\beta}$ regularity.

Besides the works above, there are many other works focusing on the flows over rough domains, such as the wall laws for the unsteady incompressible Navier-Stokes equations \cite{MR1906814} and for the compressible flows \cite{MR3642228,MR3335531}.
Moreover, we refer the readers to references in this paper and the  references  therein for more results.

\subsection{Main Results}
Now, we introduce our main results and we start by fixing the problem setting.

We consider the laminar viscous two-dimensional stationary and non-stationary Navier-Stokes Equations through a domain $\Omega_\varepsilon$ consisting of the channel $\Omega_0=:(0,1)\times(0,1)$, the interface $\Gamma_0=:(0,1)\times\{0\}$, and the layers of roughness $\Omega_\varepsilon\setminus\Omega_0$. The exact definition of the rough domain $\Omega_\varepsilon$ will be given in next subsection.

Denote $\Gamma_0=:(0,1)\times \{0\}$, $\Gamma_1=:(0,1)\times \{1\}$, $\Gamma_\varepsilon=:\partial \Omega_\varepsilon\setminus\partial\Omega_0$, $\Sigma_0=:\{0\}\times (0,1)$ and $\Sigma_1=:\{1\}\times (0,1)$.
For $0<\varepsilon \ll 1$, we consider the following stationary Navier-Stokes Equations:
\begin{equation}\label{1.1}
\left\{\begin{aligned}
-\Delta U_{S,\varepsilon}+U_{S,\varepsilon}\cdot \nabla  U_{S,\varepsilon}+\nabla P_{S,\varepsilon}&=0 \quad \text{in }\Omega_\varepsilon,\\
\operatorname{div}U_{S,\varepsilon}&=0 \quad \text{in }\Omega_\varepsilon,\\
U_{S,\varepsilon}&=0 \quad \text{on }\Gamma_\varepsilon\cup \Gamma_1,\\
\ U_{S,\varepsilon,2}&=0 \quad \text{on }\Sigma_0\cup \Sigma_1,\\
P_{S,\varepsilon}=p_0 \text{ on }\Sigma_0&\quad \text{ and }\quad\ P_{S,\varepsilon}=p_1 \text{ on }\Sigma_1,
\end{aligned}\right.\end{equation}
for some constants $p_0$ and $p_1$.

Now our aim is to investigate the effective behavior of the velocities $U_{S,\varepsilon}$ as $\varepsilon\rightarrow 0$, which means that the characteristic size of the irregularities tends to zero.

It is obvious that in $\Omega_0$ the flow continues to be governed by the stationary Navier-Stokes Equations. Moreover, the presence of the irregularities would only contribute to the effective boundary conditions at the lateral boundary \cite{MR1813101,MR1952473,MR4123336,higaki2021large}, and the main goal of this paper is to obtain the higher order convergence rates as $\varepsilon\rightarrow0$, compared to the case considered in \cite{MR1813101,MR1952473}.

Recall that the classic Poiseuille flow in $\Omega_0$ is given by
\begin{equation}\label{1.2}
(U_{S,0},P_{S,0})=\left(\frac12(p_1-p_0)x_2(x_2-1),0,(p_1-p_0)x_1+p_0\right),\end{equation}
and we extend $U_{S,0}$ to be 0 for $x\in \Omega_\varepsilon\setminus\Omega_0$.

Moreover, the effective model \cite[Proposition 4]{MR1813101} is defined via the first-order corrector and given by
\begin{equation}\label{1.3}
(U_{S,\text{eff}},P_{S,\text{eff}})=\left(\frac{p_1-p_0}{2}\left(x_2^2-(x_2+\varepsilon\alpha_1)
\frac{1}{1+\varepsilon\alpha_1}\right),0,(p_1-p_0)x_1+p_0)\right),\end{equation}
with the constant $\alpha=(\alpha_1,0)$ defined in Proposition \ref{p2.3}.

Throughout this paper, we always denote $h^\varepsilon(x)=:h(x/\varepsilon)$ if the content is understood and the notation $C\varepsilon^{2-}$ means $C_\delta\varepsilon^{2-\delta}$ for any $1/2>\delta>0$, and $C_\delta\rightarrow\infty$ as $\delta\rightarrow 0$.

Now, the first goal is to obtain higher order convergence rates with the domain $\Omega_\varepsilon$ being more irregular than the case considered in \cite{MR1813101,MR1952473}. Moreover, with the help of $U_{S,\text{eff}}$, we can obtain the $O(\varepsilon^{3/2})$ convergence rates in $L^2$ and $O(\varepsilon^{2-})$ convergence rates in $L^1$, stated in Corollary \ref{c1.2}.
\begin{thm}\label{t1.1}
Let the domain $\Omega_\varepsilon$ be defined in Definition \ref{d1.9}, then
there exists a universal small constant $C_1'<1$, such that if $|p_1-p_0|\leq C_1'$, then
\begin{equation*}
\left\|U_{S,\varepsilon}-U_{S,0}+\frac\varepsilon 2(p_1-p_0)\left(V(x/\varepsilon)-\alpha\right)
+\frac\varepsilon 2(p_1-p_0)\alpha_1(1-x_2)e_1\right\|_{L^2(\Omega_0)}\leq C \varepsilon^{2-},
\end{equation*}
for the constant $C$ independent of $\varepsilon$, where the function $V$ is the first order corrector defined in \eqref{2.6}.
\end{thm}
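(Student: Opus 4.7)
The plan is to construct an explicit ansatz that matches $U_{S,\varepsilon}$ to leading order, derive an elliptic system for the remainder, lift its lateral boundary trace, and close the estimate by a standard energy argument, with the smallness of $|p_1-p_0|$ taming the nonlinear coupling. Concretely, I would set
\[
\widetilde U_\varepsilon := U_{S,0} - \tfrac{\varepsilon}{2}(p_1-p_0)\bigl(V^\varepsilon - \alpha\bigr) - \tfrac{\varepsilon}{2}(p_1-p_0)\alpha_1(1-x_2)e_1,
\]
together with an associated approximate pressure $\widetilde P_\varepsilon := P_{S,0} + \tfrac{\varepsilon}{2}(p_1-p_0)\,q^\varepsilon$, where $(V,q)$ is the first-order corrector of Proposition \ref{p2.3}. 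The linear-in-$x_2$ sub-corrector is included precisely to remove the unwanted constant $-\alpha$ that the boundary-layer term $V^\varepsilon-\alpha$ leaves behind on $\Gamma_1$, while preserving divergence-freeness and matching at $\Gamma_0$.

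Put $R_\varepsilon := U_{S,\varepsilon} - \widetilde U_\varepsilon$ and $S_\varepsilon := P_{S,\varepsilon} - \widetilde P_\varepsilon$. Because $U_{S,0}$, the rescaled corrector $(V^\varepsilon,q^\varepsilon)$, and $(1-x_2)e_1$ are each exact (rescaled) Stokes solutions, the ansatz $(\widetilde U_\varepsilon,\widetilde P_\varepsilon)$ produces \emph{no} interior Stokes residual in $\Omega_\varepsilon$. Consequently the remainder satisfies
\[
-\Delta R_\varepsilon + U_{S,\varepsilon}\!\cdot\!\nabla R_\varepsilon + R_\varepsilon\!\cdot\!\nabla \widetilde U_\varepsilon + \nabla S_\varepsilon = -\,\widetilde U_\varepsilon\!\cdot\!\nabla \widetilde U_\varepsilon,\qquad \nabla\!\cdot\! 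R_\varepsilon = 0,
\]
with boundary data vanishing everywhere except on $\Sigma_0\cup\Sigma_1$. On that inflow/outflow trace, the exponential decay of $V-\alpha$ away from the bumpy layer (Proposition \ref{p2.3}) makes the portion with $x_2$ bounded away from $0$ negligible, while the portion near $x_2=0$ is of size $O(\varepsilon)$ in $L^\infty$ but rapidly oscillating. I would absorb it by a divergence-free Bogovski\u{\i}-type lift $\phi_\varepsilon$ supported in two thin vertical strips of width $\sim\varepsilon^{\gamma}$ about $\Sigma_0\cup\Sigma_1$, obeying $\|\phi_\varepsilon\|_{L^2(\Omega_0)}\lesssim\varepsilon^{2-\gamma}$ and $\|\nabla\phi_\varepsilon\|_{L^2(\Omega_0)}\lesssim\varepsilon^{3/2-\gamma}$. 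Then $R_\varepsilon-\phi_\varepsilon$ is admissible in the weak formulation, testing it against itself annihilates the pressure as well as the convection $U_{S,\varepsilon}\!\cdot\!\nabla R_\varepsilon\cdot R_\varepsilon$, the linear cross term $\int R_\varepsilon\!\cdot\!\nabla\widetilde U_\varepsilon\cdot R_\varepsilon$ is dominated by $C|p_1-p_0|\,\|\nabla R_\varepsilon\|_{L^2}^2$ and absorbed using $|p_1-p_0|\le C_1'$, and the forcing $\widetilde U_\varepsilon\!\cdot\!\nabla \widetilde U_\varepsilon$ contributes an $H^{-1}$ residual bounded by $\varepsilon^{2-\gamma}$ through the Lipschitz/decay bounds on $V$ from Proposition \ref{p2.3}. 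Combining these inputs yields $\|\nabla R_\varepsilon\|_{L^2(\Omega_\varepsilon)}\lesssim\varepsilon^{2-\gamma}$, and the Poincar\'e inequality on $\Omega_0$ upgrades this to the claimed $L^2(\Omega_0)$ bound.

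The main obstacle is \emph{not} the nonlinearity: the hypothesis $|p_1-p_0|\le C_1'$ makes the nonlinear terms perturbative in a routine way. The real difficulty is the mismatch on $\Sigma_0\cup\Sigma_1$, since the corrector $V^\varepsilon$ is engineered only for the bumpy bottom and has no a priori control on the vertical inflow/outflow sides. The loss $\gamma\in(0,1/2)$ stems from balancing the thickness of the boundary-lift strip against the $L^2$-blow-up of its gradient and, implicitly, from a logarithmic factor produced by integrating the oscillatory corrector against those vertical sides; taking $\gamma=0$ would require a more delicate cancellation than the energy method alone provides.
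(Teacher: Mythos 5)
Your overall ansatz is essentially the paper's, but your closing argument has a genuine gap: you claim the energy method applied to the remainder $R_\varepsilon$ yields $\|\nabla R_\varepsilon\|_{L^2(\Omega_\varepsilon)}\lesssim\varepsilon^{2-\gamma}$, which the energy estimate cannot deliver. Several sources in the energy identity force an $\varepsilon^{3/2}$ barrier. For instance, the boundary term on $\Gamma_0$ produced by the $\alpha_1(1-x_2)e_1$ piece (the paper's $\chi_c$ corrector) is of the form $\int_{\Gamma_0}\varepsilon\,\mathcal{W}_{S,\varepsilon,1}$, and by the trace inequality \eqref{2.2} this is only controlled by $C\varepsilon^{3/2}\|\nabla\mathcal{W}_{S,\varepsilon}\|_{L^2(\Omega_\varepsilon)}$; similarly $(p_1-p_0)\int_{\Omega_\varepsilon\setminus\Omega_0}\mathcal{W}_{S,\varepsilon,1}$ and the cross terms $I_{7,1},I_{7,2}$ are each $O(\varepsilon^{3/2})\|\nabla\mathcal{W}_{S,\varepsilon}\|_{L^2}$. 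So the sharp output of Theorem~\ref{t2.4} is $\|\nabla\mathcal{W}_{S,\varepsilon}\|_{L^2(\Omega_\varepsilon)}\lesssim\varepsilon^{3/2}$, and Poincar\'e upgrades that only to $\|\mathcal{W}_{S,\varepsilon}\|_{L^2}\lesssim\varepsilon^{3/2}$. The jump from $\varepsilon^{3/2}$ in $H^1$ to $\varepsilon^{2-}$ in $L^2(\Omega_0)$ is the crux of the proof, and the paper gets it from a separate \emph{duality} argument (cf.\ \cite[Eqs.\ (73)--(74)]{MR1813101} and Conca \cite{MR0884812,MR0884813}), not from the energy estimate. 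Your proposal never invokes duality, so it cannot reach the stated $\varepsilon^{2-}$ rate.

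A secondary issue: your use of a generic Bogovski\u{\i}-type divergence-free lift $\phi_\varepsilon$ in thin vertical strips is structurally different from what the paper does and loses the exact cancellations the paper relies on. The inner/outer boundary-layer correctors $S^{\mathrm{in},\varepsilon},S^{\mathrm{out},\varepsilon}$ in \eqref{2.11}--\eqref{2.15} are built with explicit polynomial cutoffs in $x_1$ so that, upon integration by parts in the energy identity, the terms $I_{5,1}$ and $I_{6,1}$ from their gradients \emph{exactly} cancel the terms $I_{3,1}$ and $I_{2,1}$ from $\partial_1(\varepsilon V^\varepsilon)$ on $\Sigma_0$ and $\Sigma_1$ (see \eqref{2.25}--\eqref{2.26}); in addition, their normal components on $\Sigma_0\cup\Sigma_1$ vanish with the test function because $\mathcal{W}_{S,\varepsilon,2}=0$ there. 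A Bogovski\u{\i} lift supported in $O(\varepsilon^\gamma)$-strips is not designed to reproduce these cancellations; you would instead pick up boundary terms of size governed by the strip width, and tracking them gives at best the $\varepsilon^{3/2}$ scaling again. Moreover, even accepting your claimed bounds on $\phi_\varepsilon$, the energy inequality would only control $\nabla(R_\varepsilon-\phi_\varepsilon)$ by $\|\nabla\phi_\varepsilon\|_{L^2}\lesssim\varepsilon^{3/2-\gamma}$, which after adding back $\phi_\varepsilon$ gives $\|R_\varepsilon\|_{L^2}\lesssim\varepsilon^{3/2-\gamma}$ --- worse, not better. To fix the argument you need two changes: replace the generic lift by the paper's tailored boundary-layer corrections so the $\Sigma_0\cup\Sigma_1$ terms cancel exactly, and add the $L^2$ duality step whose rate $\varepsilon^{2-}$ comes from $\|\nabla S^\varepsilon\|_{L^q}\lesssim\varepsilon^{2/q}$ for $1<q\le 2$ as in \eqref{2.16}.
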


As a direct corollary of Theorem \ref{t1.1}, we have the following convergence rates:
\begin{cor}\label{c1.2}
Under the conditions in Theorem \ref{t1.1}, there holds
\begin{equation*}
\left\|U_{S,\varepsilon}-U_{S,\text{eff}}+\frac\varepsilon 2(p_1-p_0)\left(V(x/\varepsilon)-\alpha\right)
\right\|_{L^2(\Omega_0)}\leq C \varepsilon^{2-},
\end{equation*}
\begin{equation*}
\left\|U_{S,\varepsilon}-U_{S,\text{eff}}
\right\|_{L^2(\Omega_0)}\leq C \varepsilon^{3/2},
\end{equation*}
\begin{equation*}
\left\|U_{S,\varepsilon}-U_{S,\text{eff}}
\right\|_{L^1(\Omega_0)}\leq C \varepsilon^{2-},
\end{equation*}
for the constant $C$ independent of $\varepsilon$.
\end{cor}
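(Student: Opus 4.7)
The plan is to deduce each of the three estimates from Theorem \ref{t1.1} via two ingredients: (i) an elementary algebraic comparison between the Poiseuille profile $U_{S,0}$ and the effective profile $U_{S,\text{eff}}$, and (ii) the decay of the first-order corrector $V-\alpha$ at infinity.

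For (i), a direct computation from \eqref{1.2}--\eqref{1.3} yields
\begin{equation*}
U_{S,0}-U_{S,\text{eff}}=\frac{p_1-p_0}{2}\cdot\frac{\varepsilon\alpha_1(1-x_2)}{1+\varepsilon\alpha_1}\,e_1,
\end{equation*}
and hence
\begin{equation*}
(U_{S,0}-U_{S,\text{eff}})-\frac{\varepsilon}{2}(p_1-p_0)\alpha_1(1-x_2)e_1 = -\frac{p_1-p_0}{2}\cdot\frac{\varepsilon^2\alpha_1^2(1-x_2)}{1+\varepsilon\alpha_1}\,e_1,
\end{equation*}
which is uniformly of order $\varepsilon^2$ on $\Omega_0$. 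Substituting this identity into the conclusion of Theorem \ref{t1.1} and applying the triangle inequality absorbs the $O(\varepsilon^2)$ correction into the $C\varepsilon^{2-}$ error, which immediately gives the first estimate of the corollary.

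For (ii), I would invoke the decay of $V-\alpha$ away from the bumpy boundary, guaranteed by the corrector construction of Higaki--Prange--Zhuge \cite{higaki2021large,MR4619004}. After the rescaling $y=x/\varepsilon$, this decay translates into
\begin{equation*}
\|V(\cdot/\varepsilon)-\alpha\|_{L^2(\Omega_0)}\leq C\varepsilon^{1/2},\qquad \|V(\cdot/\varepsilon)-\alpha\|_{L^1(\Omega_0)}\leq C\varepsilon,
\end{equation*}
since the bulk of $V(\cdot/\varepsilon)-\alpha$ is concentrated in a boundary strip of thickness $O(\varepsilon)$. Combining the $L^2$ estimate above with the first estimate of the corollary via the triangle inequality yields the $L^2$ bound of order $\varepsilon^{3/2}$; combining the $L^1$ estimate above with the first estimate of the corollary and the embedding $\|\cdot\|_{L^1(\Omega_0)}\leq|\Omega_0|^{1/2}\|\cdot\|_{L^2(\Omega_0)}$ applied to the $O(\varepsilon^{2-})$ remainder yields the $L^1$ bound of order $\varepsilon^{2-}$.

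The only non-routine step is to pin down the precise decay rate of $V-\alpha$ needed for these two rescaled norms; for bumpy John domains this is already encoded in the corrector construction from the cited works, so no additional analysis should be required beyond Theorem \ref{t1.1} and the algebraic identity above.
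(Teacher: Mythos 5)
Your proposal is correct and follows essentially the same route as the paper: the algebraic identity you derive for $U_{S,0}-U_{S,\text{eff}}$ is exactly the paper's \eqref{2.44}, the decomposition is \eqref{2.45}, and the rescaled $L^p$ bounds $\|V(\cdot/\varepsilon)-\alpha\|_{L^p(\Omega_0)}\le C\varepsilon^{1/p}$ for $p=1,2$ are precisely what Proposition \ref{p2.3}, estimate \eqref{2.8-1}, provides after the change of variables $y=x/\varepsilon$ and periodicity in $y_1$.
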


\begin{rmk}\label{r1.3}
(1) Compared with \cite{MR1813101}, the main difference of our paper is stated as following:

$\text{(i)}$: We work in bumpy John domains with $C^{2}$ regularity in some neighborhood of the boundary point $(0,0)$, as defined in Definition \ref{d1.9} that are not necessarily graphs, while in \cite{MR1813101}, the bumpy boundary is given by a Lipschitz continuous boundary with the similar $C^{2}$ regularity.

$\text{(ii)}$: We obtain an $O(\varepsilon^{2-})$ approximation in $L^2$ for the
velocity and an $O(\varepsilon^{2-})$ convergence rates in $L^1$ as well as an $O(\varepsilon^{3/2})$ convergence rates in $L^2$ approximated by the Navier's wall
laws, while in \cite{MR1813101}, only an $O(\varepsilon^{3/2})$ approximation in $L^2$ for the
velocity and an $O(\varepsilon^{3/2-})$  convergence rates in $L^2$ were obtained.\\

(2) J\"{a}ger and Mikeli\'{c} \cite{MR1952473} also considered couette flows (the 3D case) over a rough boundary given by a Lipschitz continuous graph. Similar to the results in Theorem \ref{t1.1} and Corollary \ref{c1.2}, they obtained an $O(\varepsilon^{2})$ approximation for the effective mass
flow and an $O(\varepsilon^{2})$ convergence rates in $L^1$ for the velocity. The solution considered in \cite{MR1952473} is periodic in $(x_1,x_2)$, so there is no need to correct the value of  approximation solution  on the inflow/outflow boundaries
$(\{0\}\cup\{1\})\times(0,1)\times(0,1)$. Thus, there is no  additional regularity assumption on the neighborhood of the boundary point.

\end{rmk}

The second goal of this paper is to investigate the following non-stationary Navier-Stokes Equations:
\begin{equation}\label{1.4}
\left\{\begin{aligned}
\partial_t U_{N,\varepsilon}-\Delta U_{N,\varepsilon}+U_{N,\varepsilon}\cdot \nabla  U_{N,\varepsilon}+\nabla P_{N,\varepsilon}&=0 \quad\text{in }\Omega_\varepsilon\times (0,\infty),\\
\operatorname{div}U_{N,\varepsilon}&=0 \quad \text{in }\Omega_\varepsilon\times (0,\infty),\\
U_{N,\varepsilon}&=0 \quad \text{on }\{\Gamma_\varepsilon\cup \Gamma_1\}\times (0,\infty),\\
\ U_{N,\varepsilon,2}&=0 \quad \text{on }\{\Sigma_0\cup \Sigma_1\}\times (0,\infty),\\
P_{N,\varepsilon}=p_0 \text{ on }\{\Sigma_0\}\times (0,\infty),&\quad\ P_{N,\varepsilon}=p_1 \text{ on }\{\Sigma_1\}\times (0,\infty),\\
U_{N,\varepsilon}(x,0)&=\varphi(x) \text{ on }\Omega_\varepsilon,
\end{aligned}\right.\end{equation}
where $0<\varepsilon \ll 1$, $p_0$ and $p_1$ are given constants, and $\varphi(x)\in L^2(\Omega_\varepsilon)$ is the initial data. For the non-stationary Navier-Stokes Equations \eqref{1.4}, we have:

\begin{cor}\label{c1.4}
Under the conditions in Theorem \ref{t1.1}, we additionally assume that there exists a constant $\delta\in (0,1)$ such that $||\varphi-U_{S,0}||_{L^2(\Omega_\varepsilon)}\leq (1-\delta)/G_N^2$ with $G_N$ a universal constant defined in \eqref{1.5}, then for any $0< \varepsilon\leq c_1\delta^2$ and $|p_1-p_0|\leq c_1\delta$ with $c_1<1/10$ being a universal small constant, we have
\begin{equation*}
\int_{\Omega_\varepsilon}|U_{N,\varepsilon}(\cdot,t)-U_{S,0}(\cdot)|^2\leq C\varepsilon^{2}+2\exp\{-C\delta t\},
\end{equation*}
\begin{equation*}
\int_{\Omega_0}|U_{N,\varepsilon}(\cdot,t)-U_{S,\text{eff}}(\cdot)|^2\leq C\varepsilon^{3}+2\exp\{-C\delta t\},
\end{equation*}
and
\begin{equation*}
\int_{\Omega_0}|U_{N,\varepsilon}(\cdot,t)-U_{S,\text{eff}}(\cdot)|\leq C\varepsilon^{4-}+2\exp\{-C\delta t\},
\end{equation*}
for the constant $C$ independent of $\varepsilon$. Moreover, we also have
\begin{equation*}
\int_{\Omega_0}\left|U_{N,\varepsilon}-U_{S,0}+\frac\varepsilon 2(p_1-p_0)\left(V(x/\varepsilon)-\alpha+\alpha_1(1-x_2)e_1\right)\right|^2(\cdot,t)
\leq C\varepsilon^{4-}+2\exp\{-C\delta t\},
\end{equation*}
for the constant $C$ independent of $\varepsilon$.
\end{cor}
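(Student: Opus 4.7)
The plan is to subtract the stationary solution from the non-stationary one and prove exponential decay of the resulting perturbation by an $L^2$ energy method; each of the four claims then follows by triangle inequality with the stationary rates already established in Theorem~\ref{t1.1} and Corollary~\ref{c1.2}.

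First, let $(U_{S,\varepsilon},P_{S,\varepsilon})$ be the stationary solution of \eqref{1.1}, whose existence and uniqueness are guaranteed since $|p_1-p_0|\leq c_1\delta<C_1'$. Set $W_\varepsilon:=U_{N,\varepsilon}-U_{S,\varepsilon}$ and $Q_\varepsilon:=P_{N,\varepsilon}-P_{S,\varepsilon}$. Because the velocity data on $\Gamma_\varepsilon\cup\Gamma_1$, the non-penetration on $\Sigma_0\cup\Sigma_1$ and the inflow/outflow pressure values agree between \eqref{1.1} and \eqref{1.4}, the pair $(W_\varepsilon,Q_\varepsilon)$ solves
\begin{equation*}
\partial_t W_\varepsilon-\Delta W_\varepsilon+W_\varepsilon\cdot\nabla W_\varepsilon+W_\varepsilon\cdot\nabla U_{S,\varepsilon}+U_{S,\varepsilon}\cdot\nabla W_\varepsilon+\nabla Q_\varepsilon=0,\quad\operatorname{div}W_\varepsilon=0,
\end{equation*}
with $W_\varepsilon=0$ on $\Gamma_\varepsilon\cup\Gamma_1$, $W_{\varepsilon,2}=Q_\varepsilon=0$ on $\Sigma_0\cup\Sigma_1$, and initial datum $W_\varepsilon(\cdot,0)=\varphi-U_{S,\varepsilon}$.

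Testing against $W_\varepsilon$ and integrating by parts, the trilinear terms $\int W_\varepsilon\cdot\nabla W_\varepsilon\cdot W_\varepsilon$ and $\int U_{S,\varepsilon}\cdot\nabla W_\varepsilon\cdot W_\varepsilon$ vanish by $\operatorname{div}W_\varepsilon=\operatorname{div}U_{S,\varepsilon}=0$ together with the boundary conditions (in particular $W_{\varepsilon,2}=U_{S,\varepsilon,2}=0$ on $\Sigma_0\cup\Sigma_1$), and the pressure term vanishes using $Q_\varepsilon=0$ on $\Sigma_0\cup\Sigma_1$. This yields $\tfrac12\tfrac{d}{dt}\|W_\varepsilon\|_{L^2}^2+\|\nabla W_\varepsilon\|_{L^2}^2=-\int(W_\varepsilon\otimes W_\varepsilon):\nabla U_{S,\varepsilon}$. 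The 2D Ladyzhenskaya inequality $\|W_\varepsilon\|_{L^4}^2\leq C\|W_\varepsilon\|_{L^2}\|\nabla W_\varepsilon\|_{L^2}$ and the stationary energy bound $\|\nabla U_{S,\varepsilon}\|_{L^2(\Omega_\varepsilon)}\leq C|p_1-p_0|\leq Cc_1\delta$ let me absorb the drift into half of the dissipation when $c_1$ is small. Poincar\'e, valid since $W_\varepsilon$ vanishes on $\Gamma_\varepsilon\cup\Gamma_1$, then produces $\tfrac{d}{dt}\|W_\varepsilon\|_{L^2}^2+C\delta\,\|W_\varepsilon\|_{L^2}^2\leq 0$. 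Combining Gr\"onwall with the initial bound $\|W_\varepsilon(0)\|_{L^2}\leq\|\varphi-U_{S,0}\|_{L^2(\Omega_\varepsilon)}+\|U_{S,0}-U_{S,\varepsilon}\|_{L^2(\Omega_\varepsilon)}\leq(1-\delta)/G_N^2+C\varepsilon\leq 1$ gives the key decay
\begin{equation*}
\|W_\varepsilon(t)\|_{L^2(\Omega_\varepsilon)}^2\leq 2 e^{-C\delta t}.
\end{equation*}

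Each of the four inequalities in the corollary follows by triangle inequality. For the first, $U_{N,\varepsilon}-U_{S,0}=W_\varepsilon+(U_{S,\varepsilon}-U_{S,0})$, and $\|U_{S,\varepsilon}-U_{S,0}\|_{L^2(\Omega_\varepsilon)}^2$ is $O(\varepsilon^2)$: on $\Omega_0$ this is Theorem~\ref{t1.1}; on $\Omega_\varepsilon\setminus\Omega_0$ it uses $U_{S,0}\equiv 0$ there together with $|\Omega_\varepsilon\setminus\Omega_0|=O(\varepsilon)$ and $\|U_{S,\varepsilon}\|_{L^\infty}=O(\varepsilon)$ (from the corrector expansion). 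The $L^2(\Omega_0)$ estimate against $U_{S,\text{eff}}$ and the corrector-refined $L^2$ estimate are squarings of the $O(\varepsilon^{3/2})$ and $O(\varepsilon^{2-})$ rates of Corollary~\ref{c1.2} and Theorem~\ref{t1.1} respectively. The $L^1$ bound is obtained by combining the stationary $L^1$ rate in Corollary~\ref{c1.2} with the exponential decay of $\|W_\varepsilon\|_{L^2}$ via $\|W_\varepsilon\|_{L^1(\Omega_0)}\leq|\Omega_0|^{1/2}\|W_\varepsilon\|_{L^2}$.

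The main obstacle is the absorption of the quadratic drift $\int W_\varepsilon\cdot\nabla U_{S,\varepsilon}\cdot W_\varepsilon$ in 2D, which simultaneously requires the smallness of $U_{S,\varepsilon}$ coming from $|p_1-p_0|\leq c_1\delta$, the size restriction $\|\varphi-U_{S,0}\|\leq(1-\delta)/G_N^2$, and a Ladyzhenskaya-type interpolation; a secondary but essential technicality is the clean vanishing of the pressure flux in the energy identity, which is precisely the reason one must subtract $U_{S,\varepsilon}$ rather than $U_{S,0}$ or $U_{S,\text{eff}}$.
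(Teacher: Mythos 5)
Your overall strategy—subtract $U_{S,\varepsilon}$, run an $L^2$ energy estimate to get exponential decay of $W_\varepsilon:=U_{N,\varepsilon}-U_{S,\varepsilon}$, then conclude by triangle inequality with the stationary rates of Theorem~\ref{t1.1} and Corollary~\ref{c1.2}—is exactly the paper's strategy, and your observation that one must subtract $U_{S,\varepsilon}$ (not $U_{S,0}$ or $U_{S,\text{eff}}$) so that the pressure flux on $\Sigma_0\cup\Sigma_1$ vanishes is correct and important. However, there is a genuine gap in the energy identity.

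You claim that both trilinear terms $\int_{\Omega_\varepsilon} W_\varepsilon\cdot\nabla W_\varepsilon\cdot W_\varepsilon$ and $\int_{\Omega_\varepsilon} U_{S,\varepsilon}\cdot\nabla W_\varepsilon\cdot W_\varepsilon$ vanish ``by divergence-free-ness and the boundary conditions, in particular $W_{\varepsilon,2}=U_{S,\varepsilon,2}=0$ on $\Sigma_0\cup\Sigma_1$.'' This is false. Integrating by parts in, say, $\int U_{S,\varepsilon}\cdot\nabla W_\varepsilon\cdot W_\varepsilon=\tfrac12\int U_{S,\varepsilon}\cdot\nabla|W_\varepsilon|^2$ produces the boundary term $\tfrac12\int_{\partial\Omega_\varepsilon}(U_{S,\varepsilon}\cdot n)|W_\varepsilon|^2$. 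On $\Sigma_0\cup\Sigma_1$ the outward normal is $\mp e_1$, so $U_{S,\varepsilon}\cdot n=\mp U_{S,\varepsilon,1}$, which is of order $|p_1-p_0|$ there and has no reason to be zero; the condition $U_{S,\varepsilon,2}=0$ constrains the \emph{tangential} flux, not the normal one. The same applies to $\int W_\varepsilon\cdot\nabla W_\varepsilon\cdot W_\varepsilon$, whose boundary contribution $\mp\tfrac12\int_{\Sigma_j}W_{\varepsilon,1}|W_\varepsilon|^2$ survives. The paper therefore does \emph{not} integrate these terms by parts; it keeps all three nonlinear contributions $M_1,M_2,M_3$ on the right-hand side of \eqref{3.3} and bounds each directly by H\"older/Gagliardo-Nirenberg.

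This is not a cosmetic error, because the cubic-in-$W_\varepsilon$ term $M_1=-\int W_\varepsilon\cdot\nabla W_\varepsilon\cdot W_\varepsilon$ that you discard is exactly the one that forces the bootstrap. After Gagliardo--Nirenberg, $|M_1|\leq G_N^2\|W_\varepsilon\|_{L^2}\|\nabla W_\varepsilon\|_{L^2}^2$, and to absorb it into the dissipation one needs $\|W_\varepsilon(t)\|_{L^2}\leq(1-\delta/2)/G_N^2$ \emph{for all $t$ under consideration}, not just at $t=0$. The paper handles this via a continuation argument: it defines the maximal time $T^*$ on which $\|U_\varepsilon\|_{L^\infty(0,T;L^2)}\leq(1-\delta/2)/G_N^2$ holds \eqref{3.1}, proves the differential inequality on $(0,T^*)$, and then uses the decay together with the initial bound $\|U_\varepsilon(0)\|_{L^2}\leq(1-7\delta/8)/G_N^2$ to conclude $T^*=\infty$. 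Your writeup uses the size restriction only to bound $\|W_\varepsilon(0)\|_{L^2}\leq 1$, and then applies Gr\"onwall as if the drift were already absorbed---but without $M_1$ properly accounted for and without the $T^*$ continuation, the argument does not close. (A minor additional mismatch: your triangle-inequality derivation of the $L^1$ estimate gives $C\varepsilon^{2-}+Ce^{-C\delta t/2}$, which does not match the $C\varepsilon^{4-}+2e^{-C\delta t}$ claimed in the corollary statement; the paper's claim, if taken literally in $L^1$, requires a separate justification or is best read as a squared quantity.)
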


\begin{rmk}\label{r1.5}
(1) Note that $\varphi$ can depend on $x_2$ and $\varphi_2$ does not need to be identical to $0$.

(2) Another interesting problem for \eqref{1.4} is that the rough domain $\Omega_\varepsilon$ changes along with the time variable $t$, which is left for further.
\end{rmk}

At the end of the subsection, we denote $G_N\geq 1$ the universal constant such that the following Galiardo-Nirenberg's inequality holds true:
\begin{equation}\label{1.5}
||u||_{L^4(\mathbb{A}_0)}\leq G_N||u||_{L^2(\mathbb{A}_0)}^{1/2}||\nabla u||_{L^2(\mathbb{A}_0)}^{1/2},
\end{equation}
for any $u=(u_1,u_2)\in H^1(\mathbb{A}_0)$ and $u=0$ on $(0,1)\times \{-1/10\}$ with $\mathbb{A}_0=:(0,1)\times(-1/10,1)$.

\subsection{Notations and Definitions}
In this subsection, we give the notations and definitions used in this paper. We first define John domains. These domains were introduced by John in \cite{MR138225} and named after John in \cite{MR565886}.

\begin{defn}\label{d1.6}
Let $\tilde{\Omega}\subset \mathbb{R}^d$ be an open bounded set and $\tilde{x}\in \tilde{\Omega}$. We say that $\tilde{\Omega}$ is a John domain (or a bounded John domain) with respect to $\tilde{x}$ and with constant $L$, if for any $y\in \tilde{\Omega}$, there exists a Lipschitz mapping $\rho:[0,|y-\tilde{x}|]\rightarrow \tilde{\Omega}$ with Lipschitz constant $L\in (0,\infty)$, such that $\rho(0)=y$, $\rho(|y-\tilde{x}|)=\tilde{x}$ and $\text{dist}(\rho(t),\partial \tilde{\Omega})\geq t/L$ for all $t\in [0,|y-|\tilde{x}|]$.
\end{defn}
The definitions above of bounded domains can be generalized to a class of unbounded domains \cite{higaki2021large}.

\begin{defn}\label{d1.7}
Let $\tilde{\Omega}\subset \mathbb{R}^d$ containing the upper half-space of $\mathbb{R}^d$ and assume $\partial \tilde{\Omega}\subset \{-1\leq x_d\leq 0\}$. We say that $\tilde{\Omega}$ is a bumpy John domain (or a bumpy John half-space) with constants $(L,K)$, if for any $x\in \{x_d=0\}$ and any $R\geq 1$, there exists a bounded John domain $\tilde{\Omega}_R(x)$ with respect to $x_R=x+Re_d$ and with constant $L\in(0,\infty)$ according to Definition \ref{d1.6} such that
\begin{equation*}
B_{R,+}(x)\subset\tilde{\Omega}_R(x)\subset B_{KR,+}(x),
\end{equation*}
where $B_{R,+}(x)=Q_{R}(x)\cap \tilde{\Omega}$. Here $Q_R(x)$ is a cube in $\mathbb{R}^d$ centered at $x$ with side length $2R$.
\end{defn}

\begin{defn}\label{d1.8}
We say that $\tilde{\Omega}$ is a periodic bumpy John domain if the following holds:

(i) $\tilde{\Omega}$ is a bumpy John domain with constant $(L,K)$,

(ii) $\tilde{\Omega}$ is a $\mathbb{Z}$-translation invariant, namely $z+\tilde{\Omega}=\tilde{\Omega}$ for any $z\in\mathbb{Z}^{d-1}\times\{0\}$.
\end{defn}

Now we are ready to introduce the rough domain $\Omega_\varepsilon$ considered in this paper. Note that in Definition \ref{d1.9} we set $d=2$.
\begin{defn}\label{d1.9}
The domain $\Omega$ considered in this paper satisfies the following assumptions:

(i) ${\Omega}$ is a periodic bumpy John domain with constant $(L,K)$.

(ii) We assume the point $(0,0)\in \partial \Omega$ and $\partial \Omega\in C^{2}$ in some neighborhood of the boundary point (0,0).\\
Then the rough domain $\Omega_\varepsilon$ is always defined as
$\Omega_\varepsilon=:\{\varepsilon\Omega\}\cap \{(0,1)\times(-1,1)\}$ with
$\varepsilon\Omega=:\{x\in\mathbb{R}^2|\varepsilon^{-1}x\in \Omega\}$ and $\varepsilon^{-1}\in\mathbb{N}_+$. Note that the layers of roughness
$\left\{\Omega_\varepsilon\setminus\Omega_0\right\}\subset \{[0,1]\times [-\varepsilon,0]\}$.
\end{defn}

At the end of this section, we give an explanation of using Sobolev-Poinc\'{a}re's inequality. For any
$\varphi\in H^1(\Omega_\varepsilon)$ such that $\varphi=0$ on
$\partial\Omega_\varepsilon\setminus\partial\Omega_0$, i.e., $\varphi=0$ on the rough boundary
$\Gamma_\varepsilon$, we extend it to $(0,1)\times(-\varepsilon,1)$ by zero across the rough boundary
$\Gamma_\varepsilon$, then we can use the Sobolev-Poinc\'{a}re's inequality for $\varphi$ in the flat domains $(0,1)\times(-\varepsilon,1)$ and $(0,1)\times(-\varepsilon,0)$ due to $\varphi=0$ on $(0,1)\times\{-\varepsilon\}$.
\section{First Order Velocity  Corrections}

Recall that $\Gamma_0=:(0,1)\times \{0\}$, $\Gamma_1=:(0,1)\times \{1\}$,  $\Gamma_\varepsilon=:\partial \Omega_\varepsilon\setminus\partial\Omega_0$, $\Sigma_0=:\{0\}\times (0,1)$ and $\Sigma_1=:\{1\}\times (0,1)$.

Before the existence result of the Equation \eqref{1.1}, we first introduce the following auxiliary lemma, whose proof can be founded in \cite[Lemma 4]{MR1813101}.

\begin{lemma}\label{l2.1}
Let $\varphi\in H^1(\Omega_\varepsilon\setminus\Omega_0)$ satisfying $\varphi=0$ on $\partial\Omega_\varepsilon\setminus\partial\Omega_0$, then we have
\begin{equation}\label{2.1}
||\varphi||_{L^2(\Omega_\varepsilon\setminus\Omega_0)}\leq C\varepsilon||\nabla\varphi||_{L^2(\Omega_\varepsilon\setminus\Omega_0)},
\end{equation}
\begin{equation}\label{2.2}
||\varphi||_{L^2(\Gamma_0)}\leq C\varepsilon^{1/2}||\nabla\varphi||_{L^2(\Omega_\varepsilon\setminus\Omega_0)},
\end{equation}
\begin{equation}\label{2.3}
\int_0^1|\varphi(x_1,0)|dx_1\leq C\varepsilon^{1/2}||\partial_2\varphi||_{L^2(\Omega_\varepsilon\setminus\Omega_0)}.
\end{equation}
\end{lemma}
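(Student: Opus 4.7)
The plan is to reduce all three estimates to a single one-dimensional Poincaré-type computation on vertical segments, taking advantage of the fact that $\varphi$ vanishes on the rough bottom boundary. Since $\varphi = 0$ on $\partial\Omega_\varepsilon \setminus \partial\Omega_0$, I would first extend $\varphi$ by zero from $\Omega_\varepsilon \setminus \Omega_0$ to the flat rectangle $R_\varepsilon := (0,1) \times (-\varepsilon, 0)$. Because the extension is zero across a boundary portion on which the trace vanishes, the extended function $\tilde\varphi$ belongs to $H^1(R_\varepsilon)$, satisfies $\tilde\varphi \equiv 0$ on $(0,1) \times \{-\varepsilon\}$ (part of it coming from the zero extension, part from $\partial\Omega_\varepsilon \cap \{x_2 = -\varepsilon\}$), and its norms agree with those of $\varphi$ on $\Omega_\varepsilon \setminus \Omega_0$. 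In particular, the trace on $\Gamma_0 = (0,1)\times\{0\}$ is preserved since $\Gamma_0$ lies on the top of the roughness layer.

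Next I would apply the fundamental theorem of calculus in the $x_2$ direction on each vertical fiber: for a.e.\ $x_1 \in (0,1)$ and every $x_2 \in [-\varepsilon, 0]$,
\begin{equation*}
\tilde\varphi(x_1, x_2) = \int_{-\varepsilon}^{x_2} \partial_2 \tilde\varphi(x_1, s)\,ds,
\end{equation*}
so by the Cauchy--Schwarz inequality
\begin{equation*}
|\tilde\varphi(x_1, x_2)|^2 \leq (x_2 + \varepsilon) \int_{-\varepsilon}^{0} |\partial_2 \tilde\varphi(x_1, s)|^2\,ds \leq \varepsilon \int_{-\varepsilon}^{0} |\partial_2 \tilde\varphi(x_1, s)|^2\,ds.
\end{equation*}
This single pointwise bound yields all three claims. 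For \eqref{2.1}, I integrate over $R_\varepsilon$, picking up an extra factor of $\varepsilon$ from the $x_2$-integration, which combined with $\|\partial_2 \tilde\varphi\|_{L^2} \leq \|\nabla \tilde\varphi\|_{L^2}$ gives the $C\varepsilon$ factor. For \eqref{2.2}, I evaluate the bound at $x_2 = 0$ and integrate over $x_1 \in (0,1)$, producing the $C\varepsilon^{1/2}$ factor. For \eqref{2.3}, I instead estimate $|\tilde\varphi(x_1,0)|$ directly by $\int_{-\varepsilon}^0 |\partial_2\tilde\varphi(x_1,s)|\,ds$, apply Cauchy--Schwarz in $s$ (gaining $\varepsilon^{1/2}$), and then apply Cauchy--Schwarz in $x_1$ with constant $1$ over the unit interval.

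There is essentially no hard step here; the only non-trivial point is justifying the zero extension and the compatibility of the traces on $\Gamma_0$. This follows because $\partial(\Omega_\varepsilon \setminus \Omega_0) \subset \Gamma_\varepsilon \cup \Gamma_0 \cup \big(\{0,1\}\times[-\varepsilon,0]\big)$, the roughness layer lies in $(0,1)\times[-\varepsilon,0]$, and $\varphi$ vanishes on $\Gamma_\varepsilon$, so the zero extension across $\Gamma_\varepsilon$ produces an $H^1$ function on $R_\varepsilon$ with the correct boundary values. Once this is noted, the three estimates are immediate from the one-dimensional computation above, and the constants $C$ may be taken equal to $1$.
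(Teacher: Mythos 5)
Your proposal is correct and matches the argument the paper relies on: the paper itself simply cites \cite[Lemma 4]{MR1813101} and, at the end of Section 1, explicitly explains the same zero-extension device (extend $\varphi$ by zero across $\Gamma_\varepsilon$ to the flat strip $(0,1)\times(-\varepsilon,0)$, then use the vanishing trace on $(0,1)\times\{-\varepsilon\}$). Your fundamental-theorem-of-calculus/Cauchy--Schwarz computation on vertical fibers is precisely the standard one-dimensional Poincar\'{e} argument that underlies all three estimates, and the bookkeeping of the powers of $\varepsilon$ in \eqref{2.1}--\eqref{2.3} is right.
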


Now, we are ready to prove the desired non-linear stability result of Equation \eqref{1.1}:
\begin{thm}\label{t2.2}
There exists a universal small constant $C_1'$ such that for $|p_1-p_0|\leq C'_1$ and $\varepsilon\leq \varepsilon_0$ with $\varepsilon_0$ being a universal suitably small constant, the problem \eqref{1.1} has a unique solution $\left(U_{S,\varepsilon},P_{S,\varepsilon}\right)\in H^1(\Omega_\varepsilon)^2\times L^2(\Omega_\varepsilon)$ satisfying
\begin{equation}
\begin{aligned}\label{2.4}
\int_{\Omega_\varepsilon}|\nabla U_{S,\varepsilon}-\nabla U_{S,0}|^2\leq C\varepsilon.
\end{aligned}\end{equation}

Moreover, by duality, we also have
\begin{equation}\label{2.5}
\begin{aligned}
\int_{\Omega_\varepsilon}| U_{S,\varepsilon}- U_{S,0}|^2\leq C\varepsilon^2.
\end{aligned}\end{equation}
\end{thm}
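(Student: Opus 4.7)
My plan is to treat \eqref{1.1} as a perturbation of a Stokes problem on $\Omega_{\varepsilon}$, establish existence together with the $H^{1}$ bound \eqref{2.4} by a Galerkin scheme applied to the difference $W=U_{S,\varepsilon}-U_{S,0}$, and then sharpen this to the $L^{2}$ bound \eqref{2.5} via an Aubin--Nitsche duality argument. Throughout, smallness of $|p_{1}-p_{0}|$ is used to absorb the boundary residues produced by the pressure-drop condition, which break the usual antisymmetry of the transport terms.

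First I set up the weak formulation in the space $V_{\varepsilon}$ of divergence-free $\phi\in H^{1}(\Omega_{\varepsilon})^{2}$ satisfying $\phi=0$ on $\Gamma_{\varepsilon}\cup\Gamma_{1}$ and $\phi_{2}=0$ on $\Sigma_{0}\cup\Sigma_{1}$. Introducing the shifted pressure $Q=P_{S,\varepsilon}-P_{S,0}$ turns the pressure drop into a bulk force $(p_{0}-p_{1})e_{1}$, yielding $\int_{\Omega_{\varepsilon}}\nabla U_{S,\varepsilon}:\nabla\phi+\int_{\Omega_{\varepsilon}}(U_{S,\varepsilon}\cdot\nabla)U_{S,\varepsilon}\cdot\phi=(p_{0}-p_{1})\int_{\Omega_{\varepsilon}}\phi_{1}$. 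Since $U_{S,0}=0$ on $\Gamma_{0}$ and $U_{S,0,2}\equiv0$, its extension by zero across $\Gamma_{0}$ is an $H^{1}$, divergence-free field on $\Omega_{\varepsilon}$, so $W\in V_{\varepsilon}$ is a legitimate test function. Subtracting the variational identity for $U_{S,0}$, which solves Navier-Stokes in $\Omega_{0}$ and vanishes in $\Omega_{\varepsilon}\setminus\Omega_{0}$, isolates two sources concentrated near $\Gamma_{0}$: the bulk term $(p_{0}-p_{1})\int_{\Omega_{\varepsilon}\setminus\Omega_{0}}\phi_{1}$ and a trace term $-\tfrac{1}{2}(p_{1}-p_{0})\int_{\Gamma_{0}}\phi_{1}$ coming from the jump of $\partial_{2}U_{S,0}$ across $\Gamma_{0}$.

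Taking $\phi=W$ in this equation produces the energy identity. The linear convection terms inherit the factor $|p_{1}-p_{0}|$ from $\|U_{S,0}\|_{W^{1,\infty}(\Omega_{0})}$, and the trace theorem bounds the lateral residues on $\Sigma_{0}\cup\Sigma_{1}$ (present because $W_{1}$ does not vanish there) with the same small prefactor; the trilinear term is absorbed via the 2D Gagliardo-Nirenberg inequality \eqref{1.5} as $\lesssim\|\nabla W\|_{L^{2}}^{3}$. For the right-hand side, \eqref{2.1} gives the bulk bound $|p_{1}-p_{0}|\varepsilon^{3/2}\|\nabla W\|_{L^{2}}$ and \eqref{2.3} gives the dominant trace bound $|p_{1}-p_{0}|\varepsilon^{1/2}\|\nabla W\|_{L^{2}}$. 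Absorbing the $|p_{1}-p_{0}|$-small perturbations into $\|\nabla W\|_{L^{2}}^{2}$ proves \eqref{2.4}. Existence then follows by a standard Galerkin approximation on a basis of $V_{\varepsilon}$ using the uniform a priori bound, and uniqueness by applying the same energy argument to the difference of two candidate solutions.

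For \eqref{2.5}, I would use duality. Given $g\in L^{2}(\Omega_{\varepsilon})$, solve the linearized adjoint problem $-\Delta V-(U_{S,0}\cdot\nabla)V+(\nabla U_{S,0})^{T}V+\nabla R=g$, $\operatorname{div}V=0$, with $V|_{\Gamma_{\varepsilon}\cup\Gamma_{1}}=0$, $V_{2}|_{\Sigma_{0}\cup\Sigma_{1}}=0$ and $R|_{\Sigma_{0}\cup\Sigma_{1}}=0$; under smallness this is a small perturbation of a Stokes system and enjoys $\|V\|_{H^{2}(\Omega_{\varepsilon})}\leq C\|g\|_{L^{2}}$. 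Pairing the $W$-equation with $V$ through the adjoint identity represents $\int g\cdot W$ as the source evaluated at $V$ plus a nonlinear residue bounded by $\|W\|_{L^{4}}^{2}\|V\|_{H^{1}}\lesssim\varepsilon\|g\|_{L^{2}}$ via \eqref{2.4} and \eqref{1.5}. The bulk contribution is of order $\varepsilon^{2-}\|g\|_{L^{2}}$ by \eqref{2.1} together with $H^{2}\hookrightarrow W^{1,q}$ in 2D, while the crucial $\Gamma_{0}$-trace contribution is sharpened from $\varepsilon^{1/2}$ (the direct output of Lemma~\ref{l2.1}) to $\varepsilon\|g\|_{L^{2}}$ by a Taylor expansion of $V_{1}$ across the \emph{interior} interface $\Gamma_{0}$, writing $V_{1}(x_{1},0)\approx -y_{0}(x_{1})\partial_{2}V_{1}(x_{1},0)$ with $|y_{0}|\leq\varepsilon$ and controlling $\|\partial_{2}V_{1}(\cdot,0)\|_{L^{2}(\Gamma_{0})}$ by $\|V\|_{H^{2}}$ via the standard trace inequality. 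Taking the supremum over $g$ proves \eqref{2.5}. The main technical obstacle is therefore the $\varepsilon$-uniform $H^{2}$ regularity of the adjoint on the non-Lipschitz domain $\Omega_{\varepsilon}$; its proof relies essentially on the $C^{2}$ regularity of $\partial\Omega$ near $(0,0)$ from Definition~\ref{d1.9} together with interior ellipticity across $\Gamma_{0}$, and is what ultimately justifies exchanging the $\varepsilon^{1/2}$ loss of Lemma~\ref{l2.1} for the clean $\varepsilon$ decay in \eqref{2.5}.
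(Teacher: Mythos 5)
The paper itself gives no proof beyond a pointer to \cite[Propositions 1--2]{MR1813101}, so the question is whether your proposal correctly fills in that citation. Your treatment of the $H^1$ estimate \eqref{2.4} is sound and matches the J\"ager--Mikeli\'c strategy: shift the pressure, extend $U_{S,0}$ by zero, test with $W$ itself, use Lemma~\ref{l2.1} for the $\Gamma_0$ and $\Omega_\varepsilon\setminus\Omega_0$ sources, and absorb the convection terms using the smallness of $|p_1-p_0|$ and the Gagliardo--Nirenberg inequality. The boundary residues on $\Sigma_0\cup\Sigma_1$ that you flag are indeed the delicate point, and your bookkeeping is right.

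Your duality argument for \eqref{2.5}, however, has a genuine gap. You solve the linearized adjoint Stokes problem on $\Omega_\varepsilon$ and invoke an $\varepsilon$-uniform $H^2(\Omega_\varepsilon)$ estimate for it. That estimate is not available: $\Omega_\varepsilon$ is a bumpy John domain whose rough boundary $\Gamma_\varepsilon$ is in general far below Lipschitz regularity, and the $C^2$ assumption in Definition~\ref{d1.9} is only \emph{local} near the single boundary point $(0,0)$ (it is there to make the inflow/outflow corrections $S^{\text{in},\varepsilon},S^{\text{out},\varepsilon}$ well defined, not to give global regularity). Interior ellipticity across $\Gamma_0$ also does not help, because $\Gamma_\varepsilon$ comes within distance $O(\varepsilon)$ of $\Gamma_0$. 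Moreover, even granting $H^2(\Omega_\varepsilon)$, your trace improvement would only yield $\varepsilon^{1-}$, not $\varepsilon$: from $V(x_1,y_0(x_1))=0$ and the fundamental theorem of calculus, $\int_0^1|V_1(x_1,0)|\,dx_1\leq\|\partial_2 V_1\|_{L^1([0,1]\times[-\varepsilon,0])}\lesssim\varepsilon^{1-1/q}\|\nabla V\|_{L^q}$, and in two dimensions $H^2$ controls $\nabla V$ only in $L^q$ for $q<\infty$, so you cannot reach $q=\infty$ and close the gap to $\varepsilon$. The Taylor-expansion shortcut you sketch implicitly requires pointwise second derivatives, which $H^2$ does not give.

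The remedy, and what \cite[Eqs.~(73)--(74)]{MR1813101} actually do, is to pose the adjoint on the \emph{smooth} channel $\Omega_0=(0,1)^2$ (where $H^2$ regularity is available) with $V=0$ on $\Gamma_0\cup\Gamma_1$, $V_2=0$ on $\Sigma_0\cup\Sigma_1$, and then extend $V$ by zero across $\Gamma_0$ into $\Omega_\varepsilon\setminus\Omega_0$. This extension is a legitimate divergence-free test function in \eqref{2.19}, and because $V$ vanishes identically on $\Gamma_0$ and in $\Omega_\varepsilon\setminus\Omega_0$, both source terms $\partial_2 U_{S,0,1}(0)\int_{\Gamma_0}V_1$ and $(p_1-p_0)\int_{\Omega_\varepsilon\setminus\Omega_0}V_1$ drop out of the right-hand side. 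When you convert $\int_{\Omega_0}g\cdot W$ via integration by parts of $-\Delta V$, the dominant residue is $\int_{\Gamma_0}\partial_2 V\cdot W$, which is controlled by $\|\partial_2 V\|_{L^2(\Gamma_0)}\|W\|_{L^2(\Gamma_0)}\lesssim\|g\|_{L^2}\cdot\varepsilon^{1/2}\|\nabla W\|_{L^2(\Omega_\varepsilon\setminus\Omega_0)}\lesssim\varepsilon\|g\|_{L^2}$ using \eqref{2.2} and \eqref{2.4}. Together with the piece $\|W\|_{L^2(\Omega_\varepsilon\setminus\Omega_0)}\lesssim\varepsilon\|\nabla W\|_{L^2}\lesssim\varepsilon^{3/2}$ from \eqref{2.1}, and the nonlinear residue $\lesssim\|W\|_{L^4}^2\|V\|_{H^1}\lesssim\varepsilon\|g\|_{L^2}$, this gives the sharp $\varepsilon^2$ in \eqref{2.5} while never needing regularity of the adjoint up to the rough boundary $\Gamma_\varepsilon$.
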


\begin{proof}
Note that we do not need the correctors to eliminate these boundary-layer terms in the proof of Theorem \ref{2.2}, then following the similar ideas in \cite[Propositions 1-2]{MR1813101}, we would obtain these estimates, which we omit for simplicity.

\end{proof}

To continue, we introduce the following first-order boundary layer:
\begin{equation}\label{2.6}\left\{\begin{aligned}
-\Delta V+\nabla \Pi&=0\quad \text{ in }\Omega,\\
\operatorname{div}V&=0\quad \text{ in }\Omega,\\
V+(y_2,0)&=0\quad \text{ on }\partial\Omega.
\end{aligned}\right.\end{equation}

We collect some useful properties for the first-order corrector $V$ as below, whose proof can be founded in \cite[Theorems 4.1-4.2]{higaki2021large} and \cite[Proposition 3.1]{MR4619004}.
\begin{prop}\label{p2.3}
Let $(L,K)\in (0,\infty)^2$ and $\Omega\subset \mathbb{R}^2$ be a bumpy John domain with constant $(L,K)$ according to Definition \ref{d1.7}. Then there exists a unique weak solution  $(V,\Pi)\in H^1_{\text{loc}}(\bar{\Omega})^2\times L^2_{\text{loc}}(\bar{\Omega})$ of \eqref{2.6} satisfying
\begin{equation}\label{2.7}
\sup_{\xi\in\mathbb{Z}}\int_{\Omega\cap (\xi+(0,1))\times \mathbb{R}}\left(|\nabla V|^2+|\Pi|^2\right)\leq C,
\end{equation}
where the constant $C$ depends only on $(L,K)$.

 Moreover, if the domain $\Omega$ is periodic as in Definition \ref{d1.8}, then

(i) $(V,\Pi)$ is periodic in $y_1$ and there exists a constant vector $\alpha=:(\alpha_1,0)$, such that
\begin{equation}\label{2.8}\begin{aligned}
|V(y)-\alpha|+|\nabla V(y)|+|\Pi(y)|\leq C||V(\cdot,0)||_{L^2(-1,1)}e^{-y_2/2}\quad \text{for }y_2>1;
\end{aligned}\end{equation}

(ii) for any $1\leq p\leq 2$, we have
\begin{equation}\label{2.8-1}
\sup_{\xi\in\mathbb{Z}}\int_{\Omega\cap (\xi+(0,1))\times \mathbb{R}}\left(|V-\alpha|^p+|\nabla V|^p+|\Pi|^p\right)\leq C.
\end{equation}
Here $C$ depends only on $(L,K)$.
\end{prop}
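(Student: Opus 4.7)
The plan is to first establish existence together with the uniform-in-strip energy bound \eqref{2.7} by a truncation argument, then to deduce periodicity, the exponential decay \eqref{2.8}, and the $L^p$ bound \eqref{2.8-1} from a Fourier analysis in the flat region $\{y_2>1\}$.

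For existence, I would first construct a divergence-free lift of the Dirichlet data $-(y_2,0)$. Let $\chi(y_2)$ be a smooth cutoff with $\chi=1$ for $y_2\le 1/2$ and $\chi=0$ for $y_2\ge 1$, set $\Phi^{(0)}=-(\chi(y_2)y_2,0)$, and correct the divergence using a Bogovskii operator on a strip-periodic domain, obtaining $\Phi\in H^1_{\text{loc}}(\bar\Omega)^2$ that is divergence-free, compactly supported in the $y_2$-variable, and agrees with $-(y_2,0)$ on $\partial\Omega$. Writing $V=W+\Phi$, the unknown $W$ satisfies a homogeneous Stokes system with right-hand side $\Delta\Phi$ supported in a bounded horizontal strip. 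I would then solve the problem on the truncated domains $\Omega_R=\Omega\cap\{|y_1|<R\}$ by Lax--Milgram in the space of divergence-free $H^1_0$ vector fields, and pass to the limit $R\to\infty$.

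The hard part is proving the uniform $L^2_{\text{uloc}}$ estimate \eqref{2.7}, which is precisely what prevents the limit from trivializing. Following the Saint-Venant strategy of Higaki--Zhuge for Stokes in John half-spaces, I would set $E(R)=\int_{\Omega\cap\{|y_1|<R\}}(|\nabla W|^2+|\Pi|^2)$ and test the equation against $\eta_R W$ together with a local Bogovskii corrector that absorbs the commutator produced by cutting off a divergence-free field, thereby deriving an inequality of the form $E(R)\le C(E(R+1)-E(R))+C$. A standard hole-filling iteration then yields the uniform-in-$R$ bound; the John condition is used precisely to supply uniform Poincar\'e and Korn constants on the sliding windows, while the compact $y_2$-support of $\Phi$ keeps the source term local.

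For the periodic statements, uniqueness in the class satisfying \eqref{2.7} forces $(V,\Pi)$ to coincide with its $\mathbb{Z}$-translate in $y_1$, hence periodicity. In the flat strip $\{y_2>1\}$ the Stokes system has constant coefficients and periodic lateral data, so expanding in a Fourier series in $y_1$, the zero mode solves a 1D ODE whose only solution compatible with $\operatorname{div}V=0$ and finite energy is a constant vector; integrating the divergence-free condition across a horizontal slice and using $y_1$-periodicity forces the vertical component of this constant to vanish, giving $\alpha=(\alpha_1,0)$. The nonzero Fourier modes decay like $e^{-|k|y_2}$ by direct ODE analysis, and summing over $k$ while bounding the coefficients by the trace $\|V(\cdot,0)\|_{L^2(-1,1)}$ yields \eqref{2.8}. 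Finally, \eqref{2.8-1} follows by splitting the integration: on $\{y_2\le 1\}$ one uses Poincar\'e combined with \eqref{2.7} and the boundedness of the boundary datum $-(y_2,0)$ to control $V-\alpha$ in $L^2$, and hence in every $L^p$ with $p\le 2$ by H\"older, while on $\{y_2>1\}$ the exponential decay \eqref{2.8} trivially handles every $p\ge 1$.
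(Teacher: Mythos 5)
Your proof of part (ii) --- reduce to one period by $y_1$-periodicity, split at $y_2=1$, use the exponential decay (2.8) together with H\"older on $\{y_2>1\}$, and on $\{-1<y_2<1\}$ apply Poincar\'e to the function $V+(y_2,0)$ (zero trace on $\partial\Omega$) with the gradient bound (2.7) --- is essentially the paper's own argument. The difference is one of scope: the paper does not prove (2.7) or (2.8) at all, it cites Theorems 4.1--4.2 of Higaki--Prange--Zhuge and Proposition 3.1 of Higaki--Zhuge, whereas you sketch a full construction. Your Saint-Venant hole-filling iteration for the $L^2_{\mathrm{uloc}}$ energy bound and your Fourier-mode analysis in the flat region $\{y_2>1\}$ are in fact the mechanisms those references use, so the sketch is faithful, just redundant given the citation. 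Two small imprecisions that do not affect anything: the lift $\Phi^{(0)}=-(\chi(y_2)y_2,0)$ is already divergence-free (no $y_1$-dependence and zero second component), so no Bogovskii correction is needed at that stage; and the nonzero Fourier modes of a Stokes flow decay like $(1+y_2)e^{-2\pi|k|y_2}$ because of the biharmonic stream function, not $e^{-|k|y_2}$, though either rate comfortably dominates the stated $e^{-y_2/2}$.
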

\begin{proof}The first two estimates \eqref{2.7}-\eqref{2.8} can be found in \cite[Theorems 4.1-4.2]{higaki2021large} and \cite[Proposition 3.1]{MR4619004}. Now we need only to prove the estimate \eqref{2.8-1}. Due to the periodicity of $V$ and $\Pi$ in $y_1$, the estimate \eqref{2.8-1} is equivalent to
\begin{equation}\label{2.9*}
\int_{\Omega\cap (0,1)\times \mathbb{R}}\left(|V-\alpha|^p+|\nabla V|^p+|\Pi|^p\right)\leq C.\tag{2.9*}
\end{equation}

Noting that $\Omega\subset (-\infty,+\infty)\times (-1,+\infty)$, a direct computation shows that
$$\begin{aligned}
&\int_{\Omega\cap (0,1)\times \mathbb{R}}\left(|V-\alpha|^p+|\nabla V|^p+|\Pi|^p\right)\\[5pt]
=&\int_{\Omega\cap (0,1)\times (-1,1)}\left(|V-\alpha|^p+|\nabla V|^p+|\Pi|^p\right)+\int_{\Omega\cap (0,1)\times (1,+\infty)}\left(|V-\alpha|^p+|\nabla V|^p+|\Pi|^p\right)\\[5pt]
\leq&\int_{\Omega\cap (0,1)\times (-1,1)}|V-\alpha|^p+C,\\[5pt]
\end{aligned}$$
where we have used \eqref{2.7}, \eqref{2.8} and Hold\"{e}r's inequality in the inequality above.

To proceed, due to the boundary condition \eqref{2.6} satisfied by $V$, there holds
$$\begin{aligned}
&\int_{\Omega\cap (0,1)\times (-1,1)}|V-\alpha|^p\\[5pt]
\leq&C\int_{\Omega\cap (0,1)\times (-1,1)}|V-(y_2,0)|^p+C\int_{\Omega\cap (0,1)\times (-1,1)}\left(|y_2|^p+|\alpha_1|^p\right)\\[5pt]
\leq&C\int_{\Omega\cap (0,1)\times (-1,1)}|\nabla V-\nabla(y_2,0)|^p+C\leq C,
\end{aligned}$$
where we have used Poincar\'{e}'s inequality and \eqref{2.7} in the inequality above.
Consequently, combining the estimates above yields the desired estimate \eqref{2.9*}.

\end{proof}

To proceed, we introduce the following corrector $\chi_c(x)$ satisfying
\begin{equation}\label{2.9}
\left\{\begin{aligned}
-\Delta \chi_c +\nabla g_c&=0\quad  \text{ in }\Omega_0,\\
\operatorname{div}\chi_c&=0\quad  \text{ in }\Omega_0,\\
\chi_{c,2}&=0\quad  \text{ on }\partial \Omega_0,\\
\chi_{c,1}=0 \text{ on }\Gamma_1& \text{ and }\chi_{c,1}=1 \text{ on }\Gamma_0,\\
g_c=0\text{ on }\Sigma_0\ &\text{ and }\ g_c=0 \text{ on } \Sigma_1.
\end{aligned}\right.\end{equation}

It is easy to see that the Equation \eqref{2.9} admits a unique solution $(\chi_c,g_c)=(1-x_2,0,0)$. Moreover,  we let $\chi_c=(1,0)$ for $x\in \Omega_\varepsilon\setminus\Omega_0$ and denote

\begin{equation}\label{2.10}\begin{aligned}\tilde{W}_{S,\varepsilon}=:U_{S,\varepsilon}&-U_{S,0}-\varepsilon\left(V(x/\varepsilon)-\alpha\right)
\partial_{x_2} U_{S,0,1}(0)\\
&-x_2 I_{x_2\leq 0}\partial_{x_2} U_{S,0,1}(0)e_1-\varepsilon\alpha_1\chi_c(x)\partial_{x_2} U_{S,0,1}(0),\end{aligned}\end{equation}
where $I_{x_2\leq 0}$ is the characteristic function.

Note that $\tilde{W}_{S,\varepsilon,2}\neq 0$ on the  inflow/outflow boundaries $(\{0\}\cup\{1\})\times(0,1)$, now we are going to correct the values of $\tilde{W}_{S,\varepsilon,2}$ there. For this purpose, we introduce the inner boundary layer in $(0,\ell)\times(0,\infty)$, $\ell\ll 1$ (note that $\ell$ depends on the geometry of $\partial \Omega$),
 \begin{equation}\label{2.11}\begin{aligned}
S_1^{\text{in}}(y)&=-\frac\ell 3\left(1-\frac{y_1}\ell\right)^3\frac{\partial V_1}{\partial y_1}(0,y_2),\\[5pt]
S_2^{\text{in}}(y)&=\left(1-\frac{y_1}\ell\right)^2V_2(0,y_2);\\[5pt]
\end{aligned}\end{equation}
and the outter
boundary layer in $(1-\ell,1)\times(0,\infty)$, $\ell\ll 1,$

\begin{equation}\label{2.12}\begin{aligned}
S_1^{\text{out}}(y)&=\frac{\ell} 3\left(1-\frac{1-y_1}{\ell}\right)^3\frac{\partial V_1}{\partial y_1}(1/\varepsilon,y_2),\\[5pt]
S_2^{\text{out}}(y)&=\left(1-\frac{1-y_1}\ell\right)^2V_2(1/\varepsilon,y_2);\\[5pt]
\end{aligned}\end{equation}
Obviously,
$$\operatorname{div}S^{\text{in}}=0\text{ in } (0,\ell)\times(0,\infty);\quad \operatorname{div}S^{\text{out}}=0\text{ in } (1-\ell,1)\times(0,\infty).$$

In view of the boundary condition $\eqref{2.6}_3$ (i.e., the third line of \eqref{2.6}) satisfied by the corrector $V$ and the condition (ii) in Definition \ref{d1.9},
we can make an incompressible $H^1$-extension of $S^{\text{in}}$ to a function defined on $\mathbb{Y}\cap (0,\ell)\times(-1,0)$ and having the zero trace on $\partial [\mathbb{Y}\cap (0,\ell)\times(-1,0)]$, where the notation $\mathbb{Y}$ denotes the unit periodic cell of $\Omega\cap \{(0,1)\times (-1,0)\}$.

Then, we set
\begin{equation}\label{2.13}
S^{\text{in},\varepsilon}(x)=\varepsilon S^{\text{in}}(x/\varepsilon),\ x_1\in[0,\varepsilon\ell);\quad S^{\text{in},\varepsilon}(x)=0,\ x_1\in[\varepsilon\ell,1-\varepsilon\ell],
\end{equation}
and for $x_1\in(1-\varepsilon\ell,1]$,
\begin{equation}\label{2.14}
\begin{aligned}
S_1^{\text{out},\varepsilon}(x)&=\frac{\varepsilon\ell } 3\left(1-\frac{1-x_1}{\varepsilon\ell}\right)^3\frac{\partial V_1}{\partial y_1}(1/\varepsilon,x_2/\varepsilon),\\[5pt]
S_2^{\text{out},\varepsilon}(x)&=\varepsilon\left(1-\frac{1-x_1}{\varepsilon\ell}\right)^2V_2(1/\varepsilon,x_2/\varepsilon);\\[5pt]
\end{aligned}\end{equation}
and
\begin{equation}\label{2.15}
S^{\text{out},\varepsilon}(x)=0,\ x_1\in[0,1-\varepsilon\ell].
\end{equation}
Then, for every $q\in[1,\infty)$, we have
\begin{equation}\label{2.16}
\begin{aligned}
\varepsilon^{-1}||S^{\varepsilon}||_{L^q(\Omega_\varepsilon)}
+||\nabla S^{\varepsilon}||_{L^q(\Omega_\varepsilon)}\leq C\varepsilon^{2/q},\\[5pt]
\end{aligned}
\end{equation}
with $S^{\varepsilon}=S^{\text{in},\varepsilon}\text{ or }S^{\text{in},\varepsilon}$. Now we set
\begin{equation}\label{2.17}\begin{aligned}
\mathcal{W}_{S,\varepsilon}=:\tilde{W}_{S,\varepsilon}+S^{\text{in},\varepsilon}\partial_{x_2} U_{S,0,1}(0)
+S^{\text{out},\varepsilon}\partial_{x_2} U_{S,0,1}(0).
\end{aligned}\end{equation}
Note that, by definition, we have $\mathcal{W}_{S,\varepsilon}=0$ on the oscillating boundary
$\Gamma_\varepsilon$ and $||\mathcal{W}_{S,\varepsilon}||_{L^\infty}=O(\exp\{-C\varepsilon^{-1}\})$ on the
flat boundary $\Gamma_1$. Moreover, we know that  $\operatorname{div}\mathcal{W}_{S,\varepsilon}=0$ in
$\Omega_\varepsilon$, and $\mathcal{W}_{S,\varepsilon,2}= 0$ on the  inflow/outflow boundaries
$(\{0\}\cup\{1\})\times(0,1)$. Now, $\mathcal{W}_{S,\varepsilon}$ is a suitable test function in \eqref{2.19}.

\begin{thm}\label{t2.4}
Under the conditions in Theorem \ref{t1.1} and
with the notations above, there holds
\begin{equation}\label{2.18}
\begin{aligned}
\int_{\Omega_\varepsilon}|\nabla \mathcal{W}_{S,\varepsilon}|^2
\leq& C\varepsilon^{3}.
\end{aligned}\end{equation}
\end{thm}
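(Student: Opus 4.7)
The plan is to run an energy estimate by substituting $\mathcal{W}_{S,\varepsilon}$ as the test function in the weak formulation of \eqref{1.1}. This is admissible precisely because $\mathcal{W}_{S,\varepsilon}$ is divergence-free on $\Omega_\varepsilon$, vanishes on $\Gamma_\varepsilon$, is $O(\exp(-C\varepsilon^{-1}))$ on $\Gamma_1$, and has zero normal component on $\Sigma_0\cup\Sigma_1$. Writing $U_{S,\varepsilon}=\mathcal{W}_{S,\varepsilon}+R_\varepsilon$ with $R_\varepsilon$ collecting the full ansatz built from \eqref{2.10} and \eqref{2.17}, the variational identity produces
\begin{equation*}
\int_{\Omega_\varepsilon}|\nabla\mathcal{W}_{S,\varepsilon}|^2 = -\int_{\Omega_\varepsilon}\nabla R_\varepsilon:\nabla\mathcal{W}_{S,\varepsilon}-\int_{\Omega_\varepsilon}(U_{S,\varepsilon}\cdot\nabla U_{S,\varepsilon})\cdot\mathcal{W}_{S,\varepsilon}-\int_{\partial\Omega_\varepsilon}P_{S,\varepsilon}\mathcal{W}_{S,\varepsilon}\cdot n.
\end{equation*}

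Next I would integrate by parts piece by piece in $\int\nabla R_\varepsilon:\nabla\mathcal{W}_{S,\varepsilon}$, using the PDE satisfied by each ingredient of $R_\varepsilon$: Stokes $-\Delta U_{S,0}+\nabla P_{S,0}=0$ in $\Omega_0$; the rescaled corrector equation $-\Delta[\varepsilon V(x/\varepsilon)]+\nabla\Pi(x/\varepsilon)=0$; the trivial $-\Delta\chi_c=0$; and the distributional $-\Delta[x_2I_{x_2\leq 0}e_1]=\delta_{\Gamma_0}e_1$. All pressure gradients combine with $\int_{\partial\Omega_\varepsilon}P_{S,\varepsilon}\mathcal{W}_{S,\varepsilon}\cdot n$ through $\operatorname{div}\mathcal{W}_{S,\varepsilon}=0$, collapsing into a single boundary integral of $(P_{S,\varepsilon}-P_{S,0}-\Pi(x/\varepsilon)\partial_{x_2}U_{S,0,1}(0))\mathcal{W}_{S,\varepsilon}\cdot n$. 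On $\Sigma_0\cup\Sigma_1$ the constant pressure data cancel the leading part, while the non-constant residue from $\Pi(x/\varepsilon)$ is killed by $\mathcal{W}_{S,\varepsilon,2}=0$ there---exactly the purpose of the cutoff correctors $S^{\text{in/out},\varepsilon}$. On the interface $\Gamma_0$ the singular source from $x_2I_{x_2\leq 0}e_1$ exactly cancels the stress boundary term $\int_{\Gamma_0}\partial_{x_2}U_{S,0,1}(x_1,0)\mathcal{W}_{S,\varepsilon,1}$ coming from $U_{S,0}$, since $\partial_{x_2}U_{S,0,1}(x_1,0)\equiv\partial_{x_2}U_{S,0,1}(0)$ is independent of $x_1$ for Poiseuille flow; the corrector $\varepsilon\alpha_1\chi_c\partial_{x_2}U_{S,0,1}(0)$ is present to compensate for the interior far-field value $\alpha_1$ of $V_1$. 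Contributions on $\Gamma_1$ are absorbed into $O(\exp(-C\varepsilon^{-1}))$ remainders by the exponential decay \eqref{2.8}.

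What survives after these cancellations splits into three source terms: (i)~the Stokes residues $-\partial_{x_2}U_{S,0,1}(0)\int\nabla S^{\text{in/out},\varepsilon}:\nabla\mathcal{W}_{S,\varepsilon}$, controlled through \eqref{2.16} together with the thin-strip support of $S^{\text{in/out},\varepsilon}$ and Young's inequality; (ii)~trace residues from the non-constant parts of $\Pi(x/\varepsilon)$ and $\partial_{x_1}[\varepsilon V(x/\varepsilon)]$ against $\mathcal{W}_{S,\varepsilon}$ on $\Sigma_0\cup\Sigma_1$, bounded via the trace inequalities \eqref{2.2}--\eqref{2.3} of Lemma \ref{l2.1} and the periodic $L^p$ estimate \eqref{2.8-1}; and (iii)~the nonlinear term $\int(U_{S,\varepsilon}\cdot\nabla U_{S,\varepsilon})\cdot\mathcal{W}_{S,\varepsilon}$, which after expanding around $U_{S,0}$, using $U_{S,0}\cdot\nabla U_{S,0}=0$, and combining Theorem \ref{t2.2} with Poincar\'e \eqref{2.1} and the Galiardo--Nirenberg inequality \eqref{1.5}, reduces to one term of the form $C|p_1-p_0|\,\|\nabla\mathcal{W}_{S,\varepsilon}\|_{L^2}^2$ absorbed by smallness of $|p_1-p_0|\leq C_1'$, plus a subleading $O(\varepsilon^{3/2}\|\nabla\mathcal{W}_{S,\varepsilon}\|_{L^2})$ handled by Young, closing the estimate at $\int|\nabla\mathcal{W}_{S,\varepsilon}|^2\leq C\varepsilon^3$.

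The main obstacle is the algebraic bookkeeping of the boundary and interface terms on $\Gamma_0$ and $\Sigma_0\cup\Sigma_1$: one must verify that the four correctors $\varepsilon(V-\alpha)\partial_{x_2}U_{S,0,1}(0)$, $x_2I_{x_2\leq 0}\partial_{x_2}U_{S,0,1}(0)e_1$, $\varepsilon\alpha_1\chi_c\partial_{x_2}U_{S,0,1}(0)$, and $S^{\text{in/out},\varepsilon}\partial_{x_2}U_{S,0,1}(0)$ conspire to annihilate every leading boundary contribution and leave only residues of the right order in $H^{-1}$ to support the $\varepsilon^3$ bound. The $C^2$ regularity hypothesis of Definition \ref{d1.9} near $(0,0)$ enters precisely at the step where $S^{\text{in}}$ is $H^1$-extended divergence-freely into the roughness layer $\mathbb{Y}\cap(0,\ell)\times(-1,0)$ with zero trace on its boundary, without which the cutoff construction \eqref{2.13}--\eqref{2.15} could not be patched onto the rough boundary.
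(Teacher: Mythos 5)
Your overall strategy is the same as the paper's: test the variational equation against $\mathcal{W}_{S,\varepsilon}$, integrate by parts the cross terms in $\int\nabla R_\varepsilon:\nabla\mathcal{W}_{S,\varepsilon}$, exploit that the $x_2I_{x_2\leq 0}e_1$ and $\chi_c$ correctors produce $\Gamma_0$-boundary terms that either cancel the interface source or are $O(\varepsilon^{3/2}\|\nabla\mathcal{W}_{S,\varepsilon}\|)$ via \eqref{2.2}, and absorb the nonlinear terms by the smallness of $|p_1-p_0|$ and $\varepsilon$. The paper's \eqref{2.19}--\eqref{2.41} do exactly this. The gap is in how you dispose of the inflow/outflow boundary contributions on $\Sigma_0\cup\Sigma_1$ generated by the corrector $V$.

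First, the normal on $\Sigma_0\cup\Sigma_1$ is $\mp e_1$, so $\mathcal{W}_{S,\varepsilon}\cdot n=\mp\mathcal{W}_{S,\varepsilon,1}$ there. The pressure residue you obtain, $\int_{\Sigma_0\cup\Sigma_1}\Pi(x/\varepsilon)\,\partial_{x_2}U_{S,0,1}(0)\,\mathcal{W}_{S,\varepsilon}\cdot n$, therefore involves $\mathcal{W}_{S,\varepsilon,1}$, \emph{not} $\mathcal{W}_{S,\varepsilon,2}$, and is not annihilated by the condition $\mathcal{W}_{S,\varepsilon,2}=0$ as you claim. Second, and more seriously, you treat $\int\nabla S^{\text{in/out},\varepsilon}:\nabla\mathcal{W}_{S,\varepsilon}$ as a leftover residual to be bounded through \eqref{2.16} and Young, and you propose to \emph{bound} the $\partial_{x_1}[\varepsilon V(x/\varepsilon)]$ traces on $\Sigma_0\cup\Sigma_1$ by trace inequalities. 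Neither of these closes at the rate $\varepsilon^3$: the Cauchy--Schwarz bound $\int\nabla S^{\varepsilon}:\nabla\mathcal{W}_{S,\varepsilon}\lesssim\|\nabla S^\varepsilon\|_{L^2}\|\nabla\mathcal{W}_{S,\varepsilon}\|_{L^2}\lesssim\varepsilon\|\nabla\mathcal{W}_{S,\varepsilon}\|$ only yields $\|\nabla\mathcal{W}_{S,\varepsilon}\|^2\lesssim\varepsilon^2$ after Young, and a direct trace estimate of $\int_{\Sigma_0}\partial_1(\varepsilon V^\varepsilon)\mathcal{W}_{S,\varepsilon,1}$ only gives $\varepsilon^{1/2}\|\nabla\mathcal{W}_{S,\varepsilon}\|$, leading to $\|\nabla\mathcal{W}_{S,\varepsilon}\|^2\lesssim\varepsilon$. (Moreover the trace inequalities \eqref{2.2}--\eqref{2.3} are stated on $\Gamma_0$, not on $\Sigma_0\cup\Sigma_1$.) The mechanism the paper actually uses, and which you need, is an \emph{exact algebraic cancellation}: integrating by parts $\int\nabla(\varepsilon V^\varepsilon):\nabla\mathcal{W}_{S,\varepsilon}$ and $\int\nabla S^{\text{in/out},\varepsilon}:\nabla\mathcal{W}_{S,\varepsilon}$ produces $\Sigma_0\cup\Sigma_1$-terms that coincide because the $S$-correctors were built precisely so that $\partial_1 S_1^{\text{in},\varepsilon}|_{x_1=0}=\tfrac{\partial V_1}{\partial y_1}(0,x_2/\varepsilon)$ and $\partial_1 S_1^{\text{out},\varepsilon}|_{x_1=1}=\tfrac{\partial V_1}{\partial y_1}(1/\varepsilon,x_2/\varepsilon)$; see the identities $I_{5,1}=I_{3,1}$, $I_{6,1}=I_{2,1}$ in \eqref{2.25}--\eqref{2.26} and the resulting \eqref{2.28}. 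Without spotting this cancellation your argument stalls at $\varepsilon$ or $\varepsilon^2$, well short of \eqref{2.18}.
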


\begin{proof}
First, a direct computation yields that  $W_{S,\varepsilon}=:U_{S,\varepsilon}-U_{S,0}$ satisfies the following variational equation:

\begin{equation}\label{2.19}
\begin{aligned}
\int_{\Omega_\varepsilon}\nabla W_{S,\varepsilon}& \nabla \varphi
+\int_{\Omega_\varepsilon}\left(W_{S,\varepsilon}\cdot \nabla  W_{S,\varepsilon}+W_{S,\varepsilon,2}\partial_2  U_{S,0,1}e_1+U_{S,0,1}\partial_1 W_{S,\varepsilon} \right)\cdot \varphi\\
&=\partial_2 U_{S,0,1}(0)\int_{\Gamma_0}\varphi_1-(p_1-p_0)\int_{\Omega_\varepsilon\setminus \Omega_0}  \varphi_1,\quad \forall \varphi\in \mathcal{H}_\varepsilon,
\end{aligned}\end{equation}
with $\mathcal{H}_\varepsilon=:\left\{u\in H^1(\Omega_\varepsilon)^2,
u=0 \text{ on } \{\Gamma_1\cup \Gamma_\varepsilon\},\ u_2=0\text{ on } \{\Sigma_0\cup \Sigma_1\},\ \operatorname{div}u=0 \text{ in } \Omega_\varepsilon \right\}$.

To continue, let $n$ denote the outward unit normal to $\partial \Omega_\varepsilon$, then a careful computation yields that
\begin{equation}\label{2.20}
\begin{aligned}
&-\int_{\Omega_\varepsilon}\nabla_y V^\varepsilon \nabla \mathcal{W}_{S,\varepsilon}\cdot\partial_2 {U}_{S,0,1}(0)\\
=&-\int_{\Gamma_1}\frac{\partial (\varepsilon V^\varepsilon)}{\partial n}\mathcal{W}_{S,\varepsilon}\cdot\partial_2 {U}_{S,0,1}(0)-\int_{\Sigma_1}\partial_1 (\varepsilon V^\varepsilon)\cdot\mathcal{W}_{S,\varepsilon}\cdot\partial_2 {U}_{S,0,1}(0)\\
&\quad+\int_{\Sigma_0}\partial_1 (\varepsilon V^\varepsilon)\cdot\mathcal{W}_{S,\varepsilon}\cdot\partial_2 {U}_{S,0,1}(0)\\
=&:I_1+I_{2,1}+I_{2,2}+I_{3,1}+I_{3,2},
\end{aligned}\end{equation}
with
$$I_{2,1}=:-\int_{\Sigma_1}\partial_1 (\varepsilon V_1^\varepsilon)\cdot\mathcal{W}_{S,\varepsilon,1}\cdot\partial_2 {U}_{S,0,1}(0),$$
and analogously for other $I_{i,j}$ for $i=2,3$ and $j=1,2$.

It is easy to see that
\begin{equation}\label{2.21}
\left|I_1\right|\leq C\exp\{-C\varepsilon^{-1}\}.
\end{equation}
 Using the boundary conditions $\mathcal{W}_{S,\varepsilon,2}=0$ on $\{\Sigma_0\cup \Sigma_1\}$ yields that
\begin{equation}\label{2.22}
\begin{aligned}
I_{2,2}+I_{3,2}&=-\int_{\Sigma_1}\partial_1 (\varepsilon V_2^\varepsilon)\cdot\mathcal{W}_{\varepsilon,2}\cdot\partial_2 {U}_{S,0,1}(0)
+\int_{\Sigma_0}\partial_1 (\varepsilon V_2^\varepsilon)\cdot\mathcal{W}_{\varepsilon,2}\cdot\partial_2 {U}_{S,0,1}(0)\\
&=0.
\end{aligned}\end{equation}

To proceed, using the similar notations as $I_{i,j}$ for $i=2,3$ and $j=1,2$, a direct computation shows that
\begin{equation}\label{2.23}
\begin{aligned}
&-\int_{\Omega_\varepsilon}\nabla_x \left(S^{\text{in},\varepsilon}+S^{\text{out},\varepsilon}\right) \nabla \mathcal{W}_{S,\varepsilon}\cdot\partial_2 {U}_{S,0,1}(0)\\
=&-\int_{\Gamma_1}\frac{\partial\left(S^{\text{in},\varepsilon}+S^{\text{out},\varepsilon}\right)}{\partial n}\mathcal{W}_{S,\varepsilon}\cdot\partial_2 {U}_{S,0,1}(0)+\int_{\Sigma_0}\partial_1 S^{\text{in},\varepsilon}\cdot\mathcal{W}_{S,\varepsilon}\cdot\partial_2 {U}_{S,0,1}(0)\\
&\quad-\int_{\Sigma_1}\partial_1 S^{\text{out},\varepsilon}\cdot\mathcal{W}_{S,\varepsilon}\cdot\partial_2 {U}_{S,0,1}(0)\\
=&:I_4+I_{5,1}+I_{5,2}+I_{6,1}+I_{6,2}.
\end{aligned}\end{equation}
Similar to the explanation of \eqref{2.21}, we have
\begin{equation}\label{2.24}
\left| I_4\right|\leq C\exp\{-C\varepsilon^{-1}\}.
\end{equation}
In view of the definition of $S^{\text{in},\varepsilon}$ in \eqref{2.13}, we have
\begin{equation}\label{2.25}\begin{aligned}
I_{5,1}=\int_{\Sigma_0}\partial_1 S^{\text{in},\varepsilon}_1\cdot\mathcal{W}_{S,\varepsilon,1}\cdot\partial_2 {U}_{S,0,1}(0)=\int_{\Sigma_0}\frac{\partial V_1}{\partial y_1}(0,x_2/\varepsilon)\cdot\mathcal{W}_{S,\varepsilon,1}\cdot\partial_2 {U}_{S,0,1}(0)=I_{3,1},
\end{aligned}\end{equation}
and in view of the definition of $S^{\text{out},\varepsilon}$ in \eqref{2.14}, we have
\begin{equation}\label{2.26}\begin{aligned}
I_{6,1}=-\int_{\Sigma_1}\partial_1
S^{\text{out},\varepsilon}_1\cdot\mathcal{W}_{S,\varepsilon,1}\cdot\partial_2
{U}_{S,0,1}(0)=-\int_{\Sigma_1}\frac{\partial V_1}{\partial
y_1}\left(\frac 1 \varepsilon,\frac{x_2}\varepsilon\right)\cdot\mathcal{W}_{S,\varepsilon,1}\cdot\partial_2 {U}_{S,0,1}(0)=I_{2,1}.
\end{aligned}\end{equation}
Note that
$\mathcal{W}_{S,\varepsilon,2}=0$ on $\{\Sigma_0\cup \Sigma_1\}$, then we have
\begin{equation}\label{2.27}
I_{5,2}+I_{6,2}=0.
\end{equation}
Therefore, combining \eqref{2.20}-\eqref{2.27} yields that
\begin{equation}\label{2.28}
\left|-\int_{\Omega_\varepsilon}\nabla_x \left(\varepsilon V^\varepsilon-S^{\text{in},\varepsilon}-S^{\text{out},\varepsilon}\right) \nabla \mathcal{W}_{S,\varepsilon}\cdot\partial_2 {U}_{S,0,1}(0)\right|\leq \exp\{-C\varepsilon^{-1}\}.
\end{equation}

%
%

\noindent
To proceed, a direct computation shows that
\begin{equation}\label{2.29}\begin{aligned}
-\int_{\Omega_\varepsilon}\nabla(x_2I_{x_2\leq 0}e_1)\nabla \mathcal{W}_{S,\varepsilon} \partial_{x_2} {U}_{S,0,1}(0)
&=-\int_{\Omega_\varepsilon\setminus \Omega_0}\partial_2 \mathcal{W}_{S,\varepsilon,1}\partial_{x_2} {U}_{S,0,1}(0)\\
&=-\int_{\Gamma_0}\mathcal{W}_{S,\varepsilon,1}\partial_{x_2} {U}_{S,0,1}(0);
\end{aligned}\end{equation}
and

\begin{equation}\label{2.30}\begin{aligned}
-\int_{\Omega_\varepsilon}\varepsilon\alpha_1\partial_{x_2} {U}_{S,0,1}(0)\nabla \chi_c\nabla \mathcal{W}_{S,\varepsilon}
=&-\int_{\Omega_0}\varepsilon\alpha_1\partial_{x_2} {U}_{S,0,1}(0)\nabla (1-x_2)e_1\cdot\nabla \mathcal{W}_{S,\varepsilon}\\
=&\int_{\Omega_0}\varepsilon\alpha_1\partial_{x_2} {U}_{S,0,1}(0)\partial_2 \mathcal{W}_{S,\varepsilon,1}\\
=&\int_{\Gamma_1}\varepsilon\alpha_1\partial_{x_2} {U}_{S,0,1}(0)\mathcal{W}_{S,\varepsilon,1}-\int_{\Gamma_0}\varepsilon\alpha_1\partial_{x_2} {U}_{S,0,1}(0)\mathcal{W}_{S,\varepsilon,1}\\
=&:J_1+J_2.
\end{aligned}\end{equation}
It is easy to see that

\begin{equation}\label{2.31}
\left| J_1\right|\leq O(\exp\{-C\varepsilon^{-1}\}).
\end{equation}
Due to \eqref{2.2}, we know
\begin{equation}\label{2.32}
\left| J_2\right|\leq C\varepsilon ^{3/2} ||\nabla\mathcal{W}_{S,\varepsilon}||_{L^2(\Omega_\varepsilon)}.
\end{equation}

Now, choosing $\varphi=\mathcal{W}_{S,\varepsilon}$ in \eqref{2.19} after noting that $\mathcal{W}_{S,\varepsilon}\neq 0$  for $x\in {\Gamma_1}$ gives that

\begin{equation}\label{2.33}\begin{aligned}
&\int_{\Omega_\varepsilon}\nabla W_{S,\varepsilon} \nabla \mathcal{W}_{S,\varepsilon}
+\int_{\Omega_\varepsilon}\left(W_{S,\varepsilon}\cdot \nabla  W_{S,\varepsilon}+W_{S,\varepsilon,2}\partial_2  U_{S,0,1}e_1+U_{S,0,1}\partial_1 W_{S,\varepsilon} \right)\cdot \mathcal{W}_{S,\varepsilon}\\
=&\partial_2 U_{S,0,1}(0)\int_{\Gamma_0}\mathcal{W}_{S,\varepsilon,1}-(p_1-p_0)\int_{\Omega_\varepsilon\setminus \Omega_0}  \mathcal{W}_{S,\varepsilon,1}+\int_{\Gamma_1}\left(\frac{\partial {W}_{\varepsilon}}{\partial n}-P_{S,\varepsilon}I_{2\times 2}n\right)\mathcal{W}_{S,\varepsilon},
\end{aligned}\end{equation}
with $I_{2\times 2}=:
\left(\begin{matrix}

      1 & 0 \\

       0 & 1

  \end{matrix}\right)$.

Using Poincar\'{e}'s inequality  yields that
\begin{equation}\label{2.34}\begin{aligned}
\left|(p_1-p_0)\int_{\Omega_\varepsilon\setminus \Omega_0}  \mathcal{W}_{S,\varepsilon,1}\right|
\leq \varepsilon|p_1-p_0|\int_{\Omega_\varepsilon\setminus \Omega_0}|\nabla \mathcal{W}_{S,\varepsilon,1}|
\leq C\varepsilon^{3/2}||\nabla\mathcal{W}_{S,\varepsilon}||_{L^2(\Omega_\varepsilon)}.
\end{aligned}\end{equation}

It is easy to see that on the flat boundary $\Gamma_1$, we have
\begin{equation}\label{2.35}
\left|\int_{\Gamma_1}\left(\frac{\partial {W}_{\varepsilon}}{\partial n}-P_{S,\varepsilon}I_{2\times 2}n\right)\mathcal{W}_{S,\varepsilon}\right|\leq O(\exp\{-C\varepsilon^{-1}\}).
\end{equation}

Now we need to estimate the terms in the first line of \eqref{2.33}. First, a direct computation shows that
\begin{equation}\label{2.36}\begin{aligned}
&\left|\int_{\Omega_\varepsilon}W_{S,\varepsilon}\cdot \nabla  W_{S,\varepsilon}\cdot \mathcal{W}_{S,\varepsilon}\right|\\
\leq& \left|\int_{\Omega_\varepsilon}\left(W_{S,\varepsilon}-\mathcal{W}_{S,\varepsilon}\right)\cdot \nabla  W_{S,\varepsilon}\cdot \mathcal{W}_{S,\varepsilon}\right|+\left|\int_{\Omega_\varepsilon}\mathcal{W}_{S,\varepsilon}\cdot \nabla  W_{S,\varepsilon}\cdot \mathcal{W}_{S,\varepsilon}\right|\\[5pt]
\leq& ||W_{S,\varepsilon}-\mathcal{W}_{S,\varepsilon}||_{L^4(\Omega_\varepsilon)}||\nabla
W_{S,\varepsilon}||_{L^2(\Omega_\varepsilon)}
||\mathcal{W}_{S,\varepsilon}||_{L^4(\Omega_\varepsilon)}\\[5pt]
&\quad\quad\quad\quad\quad\quad\quad+||\mathcal{W}_{S,\varepsilon}||_{L^4(\Omega_\varepsilon)}||\nabla
W_{S,\varepsilon}||_{L^2(\Omega_\varepsilon)}
||\mathcal{W}_{S,\varepsilon}||_{L^4(\Omega_\varepsilon)}\\[5pt]
\leq& C\varepsilon\cdot
\varepsilon^{1/2}||\nabla\mathcal{W}_{S,\varepsilon}||_{L^2(\Omega_\varepsilon)}
+C\varepsilon^{1/2}||\nabla\mathcal{W}_{S,\varepsilon}||^2_{L^2(\Omega_\varepsilon)},\\[5pt]
\end{aligned}\end{equation}
where we have used \eqref{2.4} and
$$\begin{aligned}\left\|W_{S,\varepsilon}-\mathcal{W}_{S,\varepsilon}\right\|_{L^4(\Omega_\varepsilon)}
=&\left\|\left[\varepsilon\left(V(x/\varepsilon)-\alpha\right)+
x_2 I_{x_2\leq 0}e_1+\varepsilon\alpha_1\chi_c(x)\right.\right.\\
&\quad\quad\quad\quad\quad\quad\quad\quad\quad\quad\quad\quad\quad\left.\left.-S^{\text{in},\varepsilon}
-S^{\text{out},\varepsilon}\right] \partial_{x_2}U_{S,0,1}(0)\right\|_{L^4(\Omega_\varepsilon)}\\
\leq& C\varepsilon\end{aligned}$$
in the inequality above.

Next, we have
\begin{equation}\label{2.37}\begin{aligned}
&\left|\int_{\Omega_\varepsilon}W_{S,\varepsilon,2}\partial_2  U_{S,0,1}e_1 \cdot \mathcal{W}_{S,\varepsilon}\right|=\left|\int_{\Omega_\varepsilon}W_{S,\varepsilon,2}\partial_2  U_{S,0,1}\mathcal{W}_{S,\varepsilon,1}\right|\\[5pt]
\leq&\left|\int_{\Omega_\varepsilon}\mathcal{W}_{S,\varepsilon,2}\partial_2  U_{S,0,1}\mathcal{W}_{S,\varepsilon,1}\right|
+\left|\int_{\Omega_\varepsilon}\left(W_{S,\varepsilon,2}-\mathcal{W}_{S,\varepsilon,2}\right)\partial_2  U_{S,0,1}\mathcal{W}_{S,\varepsilon,1}\right|\\[5pt]
\leq &C|p_1-p_0|\cdot||\nabla\mathcal{W}_{S,\varepsilon}||_{L^2(\Omega_\varepsilon)}^2+C\varepsilon^{2-} ||\nabla\mathcal{W}_{S,\varepsilon}||_{L^2(\Omega_\varepsilon)},
\end{aligned}\end{equation}
where we have used for any $1< p\leq 2$, \begin{equation}\label{2.38-1}\begin{aligned}||W_{S,\varepsilon,2}-\mathcal{W}_{S,\varepsilon,2}||_{L^p(\Omega_\varepsilon)}=&|\partial_{x_2} U_{S,0,1}(0)|\cdot||\varepsilon V_2(x/\varepsilon)
-S^{\text{in},\varepsilon}_1-S^{\text{out},\varepsilon}_1||_{L^p(\Omega_\varepsilon)}\\[5pt]
\leq& C\varepsilon^{1+1/p}=C\varepsilon^{2-},\\[5pt]
\end{aligned}\end{equation}
in the inequality above. Note that the inequality \eqref{2.38-1} follows from \eqref{2.8-1} and \eqref{2.16}.

To see the estimate on the term $\int_{\Omega_\varepsilon}U_{S,0,1}\partial_1 W_{S,\varepsilon} \cdot \mathcal{W}_{S,\varepsilon}$ after in view of the definition of $\mathcal{W}_{S,\varepsilon}$ in \eqref{2.17},  we have
\begin{equation}\label{2.38}\begin{aligned}
\left|\int_{\Omega_\varepsilon}U_{S,0,1}\partial_1 W_{S,\varepsilon} \cdot
\mathcal{W}_{S,\varepsilon}\right|
\leq& \left|\int_{\Omega_\varepsilon}U_{S,0,1}\partial_1\mathcal{ W}_{S,\varepsilon} \cdot
\mathcal{W}_{S,\varepsilon}\right|+\left|\int_{\Omega_\varepsilon}U_{S,0,1}\partial_1 \left(W_{S,\varepsilon} -\mathcal{W}_{S,\varepsilon}\right) \cdot \mathcal{W}_{S,\varepsilon}\right|\\[5pt]
\leq& C|p_1-p_0|\cdot||\nabla\mathcal{W}_{S,\varepsilon}||^2_{L^2(\Omega_\varepsilon)}+I_{7,1}+I_{7,2},\\[5pt]
\end{aligned}\end{equation}
where
\begin{equation}\label{2.39}\begin{aligned}
I_{7,i}=:\left|\int_{\Omega_\varepsilon}U_{S,0,1}\partial_1 \left(W_{S,\varepsilon,i} -\mathcal{W}_{S,\varepsilon,i}\right) \cdot \mathcal{W}_{S,\varepsilon,i}\right|
\end{aligned}\end{equation}
for $i=1,2$.
A direct computation shows that
\begin{equation}\label{2.40}\begin{aligned}
I_{7,2}=&\left|\int_{\Omega_\varepsilon}U_{S,0,1}\left(W_{S,\varepsilon,2} -\mathcal{W}_{S,\varepsilon,2}\right) \cdot \partial_1 \mathcal{W}_{S,\varepsilon,2}\right|\\[5pt]
\leq &C||W_{S,\varepsilon,2}-\mathcal{W}_{S,\varepsilon,2}||_{L^2(\Omega_\varepsilon)}||\nabla\mathcal{W}_{S,\varepsilon}||_{L^2(\Omega_\varepsilon)}\\[5pt]
\leq & C\varepsilon^{3/2}||\nabla\mathcal{W}_{S,\varepsilon}||_{L^2(\Omega_\varepsilon)},\\[5pt]
\end{aligned}\end{equation}
where we have used \eqref{2.38-1} in the inequality above.

Due to the divergence-free condition and \eqref{2.38-1} again, we have
\begin{equation}\label{2.41}\begin{aligned}
I_{7,1}=&\left|\int_{\Omega_\varepsilon}U_{S,0,1}\partial_1 \left(W_{S,\varepsilon,1} -\mathcal{W}_{S,\varepsilon,1}\right) \cdot \mathcal{W}_{S,\varepsilon,1}\right|\\[5pt]
=&\left|\int_{\Omega_\varepsilon}U_{S,0,1}\partial_2 \left(W_{S,\varepsilon,2}
-\mathcal{W}_{S,\varepsilon,2}\right) \cdot \mathcal{W}_{S,\varepsilon,1}\right|\\[5pt]
\leq &\left|\int_{\Gamma_1}U_{S,0,1} \left(W_{S,\varepsilon,2} -\mathcal{W}_{S,\varepsilon,2}\right)
\mathcal{W}_{S,\varepsilon,1}\right| +\left|\int_{\Omega_\varepsilon}\partial_2 U_{S,0,1}
\left(W_{S,\varepsilon,2} -\mathcal{W}_{S,\varepsilon,2}\right) \cdot \mathcal{W}_{S,\varepsilon,1}\right|\\
&\quad\quad\quad\quad \quad\quad\quad\quad+\left|\int_{\Omega_\varepsilon}U_{S,0,1} \left(W_{S,\varepsilon,2} -\mathcal{W}_{S,\varepsilon,2}\right) \cdot \partial_2\mathcal{W}_{S,\varepsilon,1}\right|\\[5pt]
\leq &O(\exp\{-C\varepsilon^{-1}\})+C||W_{S,\varepsilon,2} -\mathcal{W}_{S,\varepsilon,2}||_{L^2(\Omega_\varepsilon)}
||\mathcal{W}_{S,\varepsilon,1}||_{L^{2}(\Omega_\varepsilon)}\\[5pt]
&\quad\quad+C||W_{S,\varepsilon,2} -\mathcal{W}_{S,\varepsilon,2}||_{L^2(\Omega_\varepsilon)}
||\partial_2\mathcal{W}_{S,\varepsilon,1}||_{L^{2}(\Omega_\varepsilon)}\\[5pt]
\leq & C\varepsilon^{3/2}||\nabla\mathcal{W}_{S,\varepsilon}||_{L^2(\Omega_\varepsilon)}+O(\exp\{-C\varepsilon^{-1}\}).\\[5pt]
\end{aligned}\end{equation}

Consequently,  combining \eqref{2.19}-\eqref{2.41} after choosing $|p_1-p_0|+\varepsilon^{1/2}$ suitably small yields that
\begin{equation*}
\int_{\Omega_\varepsilon}|\nabla \mathcal{W}_{S,\varepsilon}|^2
\leq C\varepsilon^{3},
\end{equation*} which completes the proof of Theorem \ref{t2.4}.
\end{proof}


Now we are ready to prove Theorem \ref{t1.1} and Corollary \ref{c1.2}.

\noindent \textbf{Proof of Theorem \ref{t1.1} and Corollary \ref{c1.2}}: Due to  \eqref{2.2} and \eqref{2.18}, we have
\begin{equation*}
|| \mathcal{W}_{S,\varepsilon}||_{L^2(\Gamma_0)}
\leq C\varepsilon^{2}.
\end{equation*}

To proceed, by a duality argument similar to \cite[Equations (73)-(74)]{MR1813101} and \cite{MR0884812,MR0884813} with a similar approximation of the pressure, there holds
\begin{equation}\label{2.42}
|| \mathcal{W}_{S,\varepsilon}||_{L^2(\Omega_0)}
\leq C\varepsilon^{2-}.
\end{equation}
Note that in the duality argument, the $O(\varepsilon^{2-})$ convergence rates come from
$||\nabla S^{\varepsilon}||_{L^q(\Omega_\varepsilon)}\leq C\varepsilon^{2/q}$
for any $2\geq p>1$ with $S^{\varepsilon}=S^{\text{in},\varepsilon}\text{ or }S^{\text{in},\varepsilon}$.

Consequently, in view of \eqref{2.16} and the definition of $\mathcal{W}_{S,\varepsilon}$ in \eqref{2.17}, we have
\begin{equation}\label{2.43}||U_{S,\varepsilon}-U_{S,0}-\varepsilon\left(V(x/\varepsilon)-\alpha\right)
\partial_{x_2} U_{S,0,1}(0)-\varepsilon\alpha_1\chi_c(x)\partial_{x_2} U_{S,0,1}(0)||_{L^2(\Omega_0)}\leq C \varepsilon^{2-},\end{equation}
which completes the proof of Theorem \ref{t1.1} after noting that $\chi_c=(1-x_2,0,0)$ and $\partial_{x_2} U_{S,0,1}(0)=-\frac12(p_1-p_0)$.

 Next, in view of the effective model $U_{s,\text{eff}}$ defined in \eqref{1.3}, a direct computation shows that

\begin{equation}\label{2.44}\begin{aligned}
&U_{S,\text{eff},1}-U_{S,0,1}-\varepsilon\alpha_1\chi_{c,1}(x)\partial_{x_2} U_{S,0,1}(0)\\
=&\frac{p_1-p_0}{2}\left(x_2^2-(x_2+\varepsilon\alpha_1-x_2(x_2-1)+\varepsilon\alpha_1(1-x_2))
\frac{1}{1+\varepsilon\alpha_1}\right)\\
=&\frac{p_1-p_0}{2}\cdot \varepsilon^2\alpha_1^2 \cdot \frac{1-x_2}{1+\varepsilon\alpha_1}.
\end{aligned}\end{equation}

Moreover,
\begin{equation}\begin{aligned}\label{2.45}
U_{S,\varepsilon}-U_{S,\text{eff}}=&U_{S,\varepsilon}-U_{S,0}-\varepsilon\left(V(x/\varepsilon)-\alpha\right)
\partial_{x_2} U_{S,0,1}(0)-\varepsilon\alpha_1\chi_c(x)\partial_{x_2} U_{S,0,1}(0)\\[5pt]
&\quad\quad+U_{S,0}+\varepsilon\alpha_1\chi_c(x)\partial_{x_2} U_{S,0,1}(0)-U_{S,\text{eff}}\\[5pt]
&\quad\quad\quad\quad\quad\quad+\varepsilon\left(V(x/\varepsilon)-\alpha\right)
\partial_{x_2} U_{S,0,1}(0).\\[5pt]
\end{aligned}\end{equation}

Now, Corollary \ref{c1.2} follows readily from \eqref{2.8-1} and \eqref{2.43}-\eqref{2.45}. Meanwhile, we can also deduce the same Navier's consitions as in \cite{MR1813101}, which we do not persue for simplicity.\qed

\section{Proof of Corollary \ref{c1.4}}
First of all, we need to prove the desired non-linear stability result of Equation \eqref{1.4}, whose solution is a
perturbation of the solution to the Equation \eqref{1.1}. However, we do not pay attention to the local existence and
uniqueness of the solution to the Equation \eqref{1.4}, and we only focus on the global existence and the decay estimates. (Actually, by considering the variational equation satisfied by $U_{N,\varepsilon}-U_{S,\varepsilon}$,
similar to the idea in \cite[Proposition 1]{MR1813101}, we would obtain the desired local existence and uniqueness in $\{L^\infty(0,T;L^2(\Omega_\varepsilon))^2\cap L^2(0,T;H^1(\Omega_\varepsilon))^2\}\times L^2(\Omega_\varepsilon\times(0,T))$ for some $T>0$.)

Denote $U_\varepsilon=:U_{N,\varepsilon}-U_{S,\varepsilon}$, $P_\varepsilon=:P_{N,\varepsilon}-P_{S,\varepsilon}$ and $T^*<\infty$ is the lifetime such that
\begin{equation}\label{3.1}
T^*=\sup_{T>0}\left\{T>0:||U_\varepsilon||_{L^\infty(0,T;L^2(\Omega_\varepsilon))}\leq (1-\delta/2)/G_N^2\right\}.
\end{equation}

In view of \eqref{1.1} and \eqref{1.4}, a direct computation shows that $(U_\varepsilon,P_\varepsilon)$ satisfies the following equation:
\begin{equation}\label{3.2}
\left\{\begin{aligned}
\partial_t U_{\varepsilon}-\Delta U_{\varepsilon}+U_{\varepsilon}\cdot \nabla  U_{\varepsilon}+\nabla P_{\varepsilon}&=-U_{S,\varepsilon}\cdot \nabla U_\varepsilon-U_\varepsilon\cdot \nabla U_{S,\varepsilon} \quad\text{in }\Omega_\varepsilon\times (0,T^*),\\
\operatorname{div}U_{\varepsilon}&=0 \quad \text{in }\Omega_\varepsilon\times (0,T^*),\\
U_{\varepsilon}&=0 \quad \text{on }\{\Gamma_\varepsilon\cup \Gamma_1\}\times (0,T^*),\\
\ U_{\varepsilon,2}&=0 \quad \text{on }\{\Sigma_0\cup \Sigma_1\}\times (0,T^*),\\
P_{\varepsilon}=0 \text{ on }\{\Sigma_0\}\times (0,T^*),&\quad\ P_{\varepsilon}=0 \text{ on }\{\Sigma_1\}\times (0,T^*),\\
U_{\varepsilon}(x,0)&=\varphi(x)-U_{S,\varepsilon} \text{ on }\Omega_\varepsilon.
\end{aligned}\right.\end{equation}

For any $0<t<T^*$, multiplying the Equation \eqref{3.2} by $U_\varepsilon$ and integrating the resulting equation over $\Omega_\varepsilon$ after using the boundary conditions, we have
\begin{equation}\label{3.3}\begin{aligned}
\frac12 \frac d{dt}\int_{\Omega_\varepsilon}|U_\varepsilon|^2+\int_{\Omega_\varepsilon}|\nabla U_\varepsilon|^2=&\int_{\Omega_\varepsilon}\left(-U_{\varepsilon}\cdot \nabla  U_{\varepsilon}-U_{S,\varepsilon}\cdot \nabla U_\varepsilon-U_\varepsilon\cdot \nabla U_{S,\varepsilon}\right)\cdot U_\varepsilon\\
=&:M_1+M_2+M_3.
\end{aligned}\end{equation}

Now, we need to estimate $M_i$ with $1\leq i\leq 3$ term by term. First, it follows by H\"{o}lder's inequality, Galiardo-Nirenberg's inequality \eqref{1.5} and the definition of $T^*$ in \eqref{3.1}, we have
\begin{equation}\label{3.4}\begin{aligned}
|M_1|\leq& ||U_\varepsilon||_{L^4(\Omega_\varepsilon)}^2||\nabla U_\varepsilon||_{L^2(\Omega_\varepsilon)}
\leq G_N^2||U_\varepsilon||_{L^2(\Omega_\varepsilon)}||\nabla U_\varepsilon||_{L^2(\Omega_\varepsilon)}^2\\[5pt]
\leq & \left(1-\delta/2\right)||\nabla U_\varepsilon||_{L^2(\Omega_\varepsilon)}^2.\\[5pt]
\end{aligned}\end{equation}

 Similarly, it follows by H\"{o}lder's inequality and Poincar\'{e}' inequality, we have
 \begin{equation}\label{3.5}\begin{aligned}
|M_2+M_3|\leq& ||U_{S,\varepsilon}||_{L^4(\Omega_\varepsilon)}||\nabla U_\varepsilon||_{L^2(\Omega_\varepsilon)}|| U_\varepsilon||_{L^4(\Omega_\varepsilon)}+||U_\varepsilon||_{L^4(\Omega_\varepsilon)}^2||\nabla U_{S,\varepsilon}||_{L^2(\Omega_\varepsilon)}\\[5pt]
\leq& C||\nabla U_{S,\varepsilon}||_{L^2(\Omega_\varepsilon)} ||\nabla U_\varepsilon||_{L^2(\Omega_\varepsilon)}^2\\[5pt]
\leq& C\left(||\nabla U_{S,0}||_{L^2(\Omega_\varepsilon)}+||\nabla (U_{S,\varepsilon}-U_{S,0})||_{L^2(\Omega_\varepsilon)}\right)||\nabla U_{\varepsilon}||_{L^2(\Omega_\varepsilon)}^2\\[5pt]
\leq & C_1\left(|p_1-p_0|+\varepsilon^{1/2}\right)||\nabla U_\varepsilon||_{L^2(\Omega_\varepsilon)}^2,\\[5pt]
\end{aligned}\end{equation}
where we have used \eqref{1.2} and \eqref{2.4} in the inequality above.

By choosing $0<\varepsilon\leq \varepsilon_{\delta_1}$ with $\varepsilon_{\delta_1}=\delta^2/(64C_1^2)$ and $|p_1-p_0|<\delta/(8C_1)$ after combining \eqref{3.3}-\eqref{3.5}, we have

\begin{equation}\label{3.6}\begin{aligned}
\frac12 \frac d{dt}\int_{\Omega_\varepsilon}|U_\varepsilon|^2\leq -\frac \delta 4\int_{\Omega_\varepsilon}|\nabla U_\varepsilon|^2\leq -C\delta/2 \int_{\Omega_\varepsilon}|U_\varepsilon|^2.
\end{aligned}\end{equation}

By Gronwall's Lemma, we have
\begin{equation}\label{3.7}
\int_{\Omega_\varepsilon}|U_\varepsilon|^2(\cdot,t)\leq e^{-C\delta t}\int_{\Omega_\varepsilon}|U_\varepsilon|^2(\cdot,0)\leq \frac{(1-7\delta/8)^2}{G_N^4}e^{-C\delta t}.
\end{equation}
where
\begin{equation*}\begin{aligned}
||U_\varepsilon(\cdot,0)||_{L^2(\Omega_\varepsilon)}
\leq& ||\varphi-U_{S,0}||_{L^2(\Omega_\varepsilon)}+||U_{S,\varepsilon}-U_{S,0}||_{L^2(\Omega_\varepsilon)}\\[5pt]
\leq& (1-\delta)/G_N^2+C_2\varepsilon\\[5pt]
\leq &(1-7\delta/8)/G_N^2,\\[5pt]
\end{aligned}\end{equation*}
for any $0<\varepsilon\leq \varepsilon_{\delta_2}=\delta/(8C_2G_N^2)$.

Consequently, be viewing the definition of $T^*$ in \eqref{3.1}, we know that $T^*=\infty$ due to \eqref{3.6}-\eqref{3.7}. Note that the inequality \eqref{3.6} also implies that $U_\varepsilon\in L^\infty(0,\infty;L^2(\Omega_\varepsilon))^2\cap L^2(0,\infty;H^1(\Omega_\varepsilon))^2$. Moreover, for any $0<t<\infty$, we have the following decay estimates:
\begin{equation*}
\int_{\Omega_\varepsilon}|U_\varepsilon|^2(\cdot,t)\leq \frac{(1-7\delta/8)^2}{G_N^4}e^{-C\delta t},
\end{equation*}
which completes the proof of Corollary \ref{c1.4} after noting the results in Corollary \ref{c1.2}.
\begin{center}{\textbf{Acknowledgements}}
\end{center}

The author wants to express his sincere appreciation to  Prof. Jinping Zhuge  for  helpful instructions and discussions.

\normalem\bibliographystyle{plain}{}

\end{document}